
\documentclass[11pt]{amsart}
\usepackage{geometry}                
\geometry{letterpaper}                   
\usepackage{graphicx}
\usepackage{amssymb}
\usepackage{epstopdf}
\usepackage{amsmath}
\usepackage{color}
\usepackage{hyperref}
\usepackage{pdfsync}
 \usepackage{tikz}
 
\usepackage[all]{xy}
\xyoption{matrix}
\xyoption{arrow}

\usetikzlibrary{arrows,decorations.pathmorphing,backgrounds,positioning,fit,petri}

 \tikzset{help lines/.style={step=#1cm,very thin, color=gray},
help lines/.default=.5} 
\tikzset{thick grid/.style={step=#1cm,thick, color=gray},
thick grid/.default=1} 

\textwidth = 6.2 in 
\textheight = 8.6 in 
\oddsidemargin = 0.1 in 
\evensidemargin = 0.1 in 
\topmargin = 0.1 in
\headheight = 0.0 in
\headsep = 0.25 in
\parskip = 0.0in
\parindent = 0.2in

 
\newtheorem{thm}{Theorem}[subsection]
\newtheorem{lem}[thm]{Lemma}
\newtheorem{cor}[thm]{Corollary}
\newtheorem{prop}[thm]{Proposition}

\theoremstyle{definition}
\newtheorem{defn}[thm]{Definition}

\theoremstyle{remark}
\newtheorem{rem}[thm]{Remark}
 
\numberwithin{equation}{section}

\DeclareGraphicsRule{.tif}{png}{.png}{`convert #1 `dirname #1`/`basename #1 .tif`.png}

\newcommand{\vs}[1]{\vskip .#1 cm} 

\newcommand{\into}{\hookrightarrow}
 \newcommand{\onto}{\twoheadrightarrow}


\newcommand{\mL}{\ensuremath{\cC^m(\Lambda)}}

\newcommand{\brk}[1]{\left<#1\right>}

\DeclareMathOperator{\Hom}{Hom}%
\DeclareMathOperator{\Ext}{Ext}%
\DeclareMathOperator{\End}{End}%
%
%

\DeclareMathOperator{\undim}{\underline{dim}}
 
\newcommand{\field}[1]{\mathbb{#1}}
\newcommand{\ZZ}{\ensuremath{{\field{Z}}}}

\newcommand{\QQ}{\ensuremath{{\field{Q}}}}

\newcommand{\commentout}[1]{}

\newcommand{\cA}{\ensuremath{{\mathcal{A}}}}

\newcommand{\cB}{\ensuremath{{\mathcal{B}}}}
\newcommand{\cC}{\ensuremath{{\mathcal{C}}}}
\newcommand{\cD}{\ensuremath{{\mathcal{D}}}}
\newcommand{\cE}{\ensuremath{{\mathcal{E}}}}

\newcommand{\cW}{\ensuremath{{\mathcal{W}}}}

\newcommand\vare{\varepsilon}

\newcommand\noi{\noindent}

\title[m-clusters using exceptional sequences]{Enumerating $m$-clusters using exceptional sequences}
\author{Kiyoshi Igusa}
\address{Department of Mathematics, Brandeis University, Waltham, MA 02454}\email{igusa@brandeis.edu}

\subjclass[2020]{
18E30:16G20}
 
 \keywords{exceptional sequence, m-cluster category, }
 
 
\begin{document}

\begin{abstract} We give a bijection between ordered $m$-clusters and (complete) $m$-exceptional sequences, a concept that we introduce for this purpose. This holds for all hereditary artin algebras. This extends the bijection in the $m=1$ case shown in \cite{1-IT13}.
\end{abstract}

\maketitle

%
%

For any Dynkin quiver $Q$, the number of clusters in the cluster category \cite{3-BMRRT} of any hereditary algebra $\Lambda$ with valued quiver $Q$ is given by the formula
\[
	\prod_{i=1}^n\frac{h+d_i}{d_i}
\]
where $h$ is the Coxeter number of $Q$ and $d_i=2,\cdots,h$ are the degrees of the principal $W$-invariant polynomials where $W$ is the Weyl group of $Q$. The formula was generalized to $m$-clusters by Fomin and Reading \cite{4-FR}:
\[
	p_\Lambda(m)=\prod_{i=1}^n\frac{hm+d_i}{d_i}.
\]
A remarkable paper by Galashin, Lam, Trinh and Williams \cite{GLTW} gives a uniform proof of this formula.

This paper gives another perspective on this formula using exceptional sequences. Namely, we show that the set of ordered $m$-clusters is in 1-1 correspondence with the set of $m$-exceptional sequences (Definition \ref{def: m-exc seq}). This can be viewed as a refinement of the observation that the leading coefficient of $n!p_\Lambda(m)$ is the number of exceptional sequences:
\[
	e_\Lambda=\frac{n!h^n}{\prod d_i}.
\]
This uniform formula for $e_\Lambda$ was first observed by Chapoton \cite{5-Ch} and now has several uniform proofs, e.g. \cite{M}, \cite{CD}, \cite{D}.

This is an old unpublished paper whose main result is being upgraded using a new joint result with Shujian Chen \cite{CI}. A closely related result of Buan, Reiten and Thomas \cite{9-BRT} is discussed in section \ref{ss41}.

The original version of this paper had a discussion of probability distributions of terms in exceptional sequences of type $A_n$. This has been removed and is being published as a separate paper \cite{AnProb}. 

%
%

\section{Basic definitions}\label{sec1}

We recall the definitions and basic properties of exceptional sequences from \cite{6-CB}, \cite{7-Ringel}. Over an hereditary algebra $\Lambda$, a module $M$ is called \emph{exceptional} if $M$ is rigid and Schurian where \emph{rigid} means $\Ext^1_\Lambda(M,M)=0$ and \emph{Schurian} means $\End_\Lambda(M)$ is a division algebra. [Schurian modules are usually called ``bricks."] An \emph{exceptional sequence} of length $\ell$ for $\Lambda$ is a sequence of exceptional modules $(E_1,E_2,\cdots,E_\ell)$ so that
\[
	\Hom_\Lambda(E_j,E_i)=\Ext^1_\Lambda(E_j,E_i)=0\text{ for } i<j\le \ell.
\]
The exceptional sequence is called \emph{complete} if it is of maximal length and this is well known to occur when $\ell=n$, the number of nonisomorphic simple $\Lambda$-modules. We consider exceptional sequences up to isomorphism where two sequences $(E_i)$, $(E_i')$ are isomorphic if they have the same length and $E_i\cong E_i'$ for all $i$.

\subsection{Relative projective terms}\label{ss11}
In each exceptional sequence we will define certain terms to be ``relatively projective.''

For any $\Lambda$-module $M$ the (right) \emph{perpendicular category} $M^\perp$ is defined to be the full subcategory of $mod\text-\Lambda$ of all objects $X$ so that $\Hom_\Lambda(M,X)=0=\Ext^1_\Lambda(M,X)$. The left perpendicular category $^\perp M$ is defined analogously. From the six-term exact sequence for $\Hom_\Lambda(M,-)$ it follows that $M^\perp$ and $^\perp M$ are closed under extensions, kernels and cokernels of morphisms between objects. Any full subcategory of $mod\text-\Lambda$ having these properties is called a \emph{wide subcategory}. It is well-known that the perpendicular categories $M^\perp$, $^\perp M$ are finitely generated, i.e., contain a generator. Conversely, if a wide subcategory $\cW$ of $mod\text-\Lambda$ has a generator $X$ then $^\perp X$ has a generator, say $Y$, and $\cW=Y^\perp$. Therefore, perpendicular categories are the same as finitely generated wide subcategories of $mod\text-\Lambda$.

A term $E_j$ in an exceptional sequence $(E_1,\cdots,E_\ell)$ will be called \emph{relatively projective} if it is a projective object of the perpendicular category of the later terms, $(E_{j+1}\oplus\cdots\oplus E_\ell)^\perp$. In a complete exceptional sequence, the first term $E_1$ is always relatively projective since the perpendicular category $(E_2\oplus\cdots\oplus E_n)^\perp$ is semi-simple. If $\Lambda$ has finite representation type then we define $f_\Lambda(x)$ to be the degree $n$ integer polynomial given by
\[
	f_\Lambda(x):=\sum_{k=1}^n e_kx^k
\]
where $e_k$ is the number of complete exceptional sequences in $mod\text-\Lambda$ with $k$ relatively projective terms. This polynomial is suitable for computing the number of $m$-clusters in the $m$-cluster category of $\Lambda$ defined below. This polynomial also gives the number of complete exceptional sequences which, following \cite{2-Ringel13}, we denote by $e_\Lambda= f_\Lambda(1)=\sum e_k$.

\subsection{$m$-exceptional sequences}\label{ss12}

Recall that, since $\Lambda$ is hereditary, the indecomposable objects of the bounded derived category of $mod\text-\Lambda$ have the form $M[k]$ where $M$ is an indecomposable $\Lambda$-module and $k\in\ZZ$. We say that $M[k]$ is \emph{exceptional} if $M$ is an exceptional $\Lambda$-module. The following definition extends an idea from \cite{1-IT13}.

\begin{defn}\label{def: m-exc seq}
For any $m\ge0$ we define an \emph{$m$-exceptional sequence} of length $\ell$ for $\Lambda$ to be a sequence of exceptional objects $(E_1,\cdots,E_\ell)$ in $\cD^b(mod\text-\Lambda)$ so that
\begin{enumerate}
\item for all $j$, $E_j=M_j[d_j]$ where $0\le d_j\le m$, 
\item $(M_1,\cdots,M_\ell)$ is an exceptional sequence in $mod\text-\Lambda$ and 
\item when $d_j=m$, the module $M_j$ is relatively projective in the perpendicular category $(M_{j+1}\oplus \cdots\oplus M_\ell)^\perp$. 
\end{enumerate}
An $m$-exceptional sequence will be called \emph{complete} if it has the maximum length $\ell=n$. For $m=1$, an $m$-exceptional sequence is called a \emph{signed exceptional sequence} \cite{1-IT13}.
\end{defn}

It is clear from this definition that, to every exceptional sequence $(E_1,\cdots,E_\ell)$ with $k$ relatively projective terms, there correspond $(m+1)^k m^{\ell-k}$ $m$-exceptional sequences since there are $m+1$ possibilities for $d_j$ when $M_j$ is relatively projective and only $m$ possibilities otherwise. Therefore, for $\Lambda$ of finite type, the number of complete $m$-exceptional sequences is given by
\[
	g_\Lambda(m):=\sum_{k=1}^n e_k(m+1)^km^{n-k}=	m^n f_\Lambda\left(
	\frac{m+1}m
	\right).
\]

We also consider the \emph{$m$-cluster category of $\Lambda$} \cite{8-mT}. This is defined to be the orbit category of the bounded derived category of $mod\text-\Lambda$ modulo the functor $\tau^{-1}[m]$:
\[
	\cC^m(\Lambda)=\cD^b(mod\text-\Lambda)/\tau^{-1}[m]
\]
We identify objects of $\cC^m(\Lambda)$ with their representative in $\cD^b(mod\text-\Lambda)$ in the fundamental domain of $\tau^{-1}[m]$ which we take to be
\begin{equation}\label{eq: m-cluster cat, FD}
	mod\text-\Lambda\cup mod\text-\Lambda[1]\cup \cdots\cup mod\text-\Lambda[m-1]\cup  \Lambda[m]
\end{equation}
Thus the indecomposable objects of $\cC^m(\Lambda)$ are represented by $M[j]$ for some indecomposable $\Lambda$-module $M$ where $0\le j\le m$ with the additional condition that $M$ is projective when $j=m$. Two object $X[j], Y[k]$ in $\cC^m(\Lambda)$ are \emph{compatible} if $\Ext_{\cD^b}^i(X[j],Y[k])=\Ext_{\cD^b}^i(Y[k],X[j])=0$ for all $i\ge1$ and they are not isomorphic. Equivalently, either
\begin{enumerate}
\item $j<k$ and $\Hom_\Lambda(Y,X)=0=\Ext^1_\Lambda(Y,X)$,
\item $j=k$ and $\Ext^1_\Lambda(X,Y)=0=\Ext^1_\Lambda(Y,X)$ and $X\not\cong Y$ or
\item $j>k$ and $\Hom_\Lambda(X,Y)=0=\Ext^1_\Lambda(X,Y)$,
\end{enumerate}
A maximal pairwise compatible set of exceptional objects of $\cC^m(\Lambda)$ has $n$ objects and is called an \emph{$m$-cluster} for $\Lambda$.

The first part of the main theorem of this paper, proved in Section \ref{sec: main theorem} below, is the following which generalizes the $m=2$ case proved in the preliminary version of \cite{1-IT13}. This new proof supersedes the one in \cite{1-IT13}.

\begin{thm}\label{main thm, part 1}
Let $\Lambda$ be a finite dimensional hereditary algebra over any field and $k,m\ge0$. Then there is a 1-1 correspondence between (isomorphism classes of) ordered $k$-tuples of pairwise compatible objects in the $m$-cluster category of $\Lambda$ and (isomorphism classes of) $m$-exceptional sequences of length $k$ for $\Lambda$.
\end{thm}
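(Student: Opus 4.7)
My plan is to prove Theorem~\ref{main thm, part 1} by induction on the length $k$, extracting the last term from each side and reducing to the corresponding bijection in the right perpendicular category of the underlying module of the last term. The base case $k=1$ is immediate: an $m$-exceptional sequence of length $1$ is a single exceptional object $E_1=M_1[d_1]$ with $0\le d_1\le m$ and $M_1$ projective whenever $d_1=m$ (since the perpendicular of the empty set is all of $mod\text-\Lambda$, so relative projectivity there reduces to ordinary projectivity), and this is precisely the data of an indecomposable exceptional object of $\cC^m(\Lambda)$ represented in the fundamental domain \eqref{eq: m-cluster cat, FD}.

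For the inductive step, fix the last term $E_k=M_k[d_k]$. Since $(M_1,\dots,M_k)$ is an exceptional sequence in $mod\text-\Lambda$, each $M_i$ with $i<k$ lies in the right perpendicular category $M_k^\perp$, which is a finitely generated wide subcategory and hence equivalent to $mod\text-\Gamma$ for some hereditary algebra $\Gamma$ with $n-1$ simples. The relative projectivity condition at any index $j<k$ transfers cleanly, because the perpendicular of $M_{j+1}\oplus\cdots\oplus M_k$ lies inside $M_k^\perp$ and coincides with the analogous perpendicular computed inside $M_k^\perp$. Hence $(E_1,\dots,E_{k-1})$ is itself an $m$-exceptional sequence of length $k-1$ for $\Gamma$, and by the inductive hypothesis it corresponds to an ordered $(k-1)$-tuple $(Y_1,\dots,Y_{k-1})$ of pairwise compatible exceptional objects in $\cC^m(\Gamma)$. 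The remaining task is to convert this into a pairwise compatible $(k-1)$-tuple $(X_1,\dots,X_{k-1})$ in $\cC^m(\Lambda)$ with every $X_i$ compatible with $X_k:=M_k[d_k]$.

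This conversion is supplied by a reduction lemma that I regard as the main obstacle: for any exceptional object $T=M[d]$ of $\cC^m(\Lambda)$, the exceptional indecomposable objects of $\cC^m(\Lambda)$ compatible with $T$ are in canonical bijection---preserving pairwise compatibility---with the exceptional indecomposable objects of $\cC^m(M^\perp)$. Concretely I expect the bijection to send an object $X[j]$ of $\cC^m(M^\perp)$ to $X[j]$ itself via the natural embedding $M^\perp\hookrightarrow mod\text-\Lambda$ whenever $j<d$ (in this range compatibility Case~(1) forces exactly $X\in M^\perp$), and for $j\ge d$ to the image of $X$ under the canonical mutation between $M^\perp$ and ${}^\perp M$ shifted by $j$, specializing at the top degree $j=m$ by replacing a relatively projective object of $M^\perp$ with the corresponding $\Lambda$-projective (the Bongartz-completion). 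Checking that this degree-dependent prescription is well-defined and bijective, and---more subtly---that pairwise compatibility on the two sides matches, will require a case analysis on the relative degrees of the two objects being compared, together with homological properties of the $M^\perp\leftrightarrow{}^\perp M$ mutation; I expect the joint result with Chen \cite{CI} cited in the introduction to furnish precisely the technical input needed for this step. Once the reduction lemma is established, applying it to $(Y_1,\dots,Y_{k-1})$ yields the desired tuple $(X_1,\dots,X_{k-1})$, and $(X_1,\dots,X_k)$ is then the sought ordered pairwise compatible $k$-tuple in $\cC^m(\Lambda)$; the inverse direction is entirely symmetric, so the induction closes.
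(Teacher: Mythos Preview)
Your overall architecture is exactly the paper's: induction on the length, peel off the last term $T[k]$, reduce to $T^\perp$, and invoke a compatibility-preserving bijection between exceptional objects of $\cC^m(T^\perp)$ and exceptional objects of $\cC^m(\Lambda)$ compatible with $T[k]$. This is precisely the paper's Key Lemma~\ref{key lemma}, and the deduction of the theorem from it is identical to the paper's argument in \S\ref{ss32}.

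Where your sketch diverges is in the shape of the bijection itself. Your prescription ``identity for $j<d$, mutation $\gamma:T^\perp\to{}^\perp T$ at the same level for $j\ge d$'' does not give a bijection: at level $j=d$ the image of $\gamma$ misses those $Y$ with $\Hom_\Lambda(Y,T)\neq0$ (these are ext-orthogonal to $T$ but not in ${}^\perp T$), and for $j>d$ it lands entirely in levels $>d$, so nothing ever reaches the subset $\cA'_d\subset\cE^{T[k]}_d$ consisting of quotients of $T^s$. The paper's actual construction (\S\ref{ss33}--\ref{ss35}) splits $\cE(T^\perp)=\cA\cup\cB$ according to whether $X$ embeds in some $T^s$; objects in $\cA$ at level $j>d$ are sent to $\alpha(X)[j-1]$ (one level \emph{down}), while objects in $\cB$ go to $\beta(X)[j]$; and level $j=d$ is handled by a separate map $\beta_d$ that is sometimes the identity and sometimes $\gamma$. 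This level-drop on $\cA$ is the mechanism that makes the count come out right, and it is also what drives the projectivity matching at the top level $j=m$ (Lemma~\ref{extension to k=m}), not a Bongartz completion as you suggest. The compatibility check (Proposition~\ref{sigma preserves compatibility}) then proceeds by a four-case analysis on whether the levels $i,i'$ and their images $j,j'$ agree or not.

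Finally, the result \cite{CI} does not appear to be used in the proof of the Key Lemma; the technical input for the top-level case is the argument of Lemma~\ref{extension to k=m}, already present in \cite{1-IT13}.
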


For example, when $m=0$, there is only one $0$-cluster of size $n$, namely the set of projective modules. So, there are $n!$ complete 0-exceptional sequences. These are the same as complete exceptional sequences in which every object is relatively projective. It follows that the number of maximal compatible subsets of $\cC^m(\Lambda)$, the \emph{$m$-clusters}, is equal to
\[
	\frac1{n!}g_\Lambda(m)=\frac{m^n}{n!} f_\Lambda\left(
	\frac{m+1}m
	\right)
\]
The case of greatest interest is when $m=1$. The number of cluster for the algebra $\Lambda$ is
\[
	\frac1{n!} f_\Lambda\left(
	2
	\right)
\]
where $f_\Lambda(2)$ is the number of signed exceptional sequences.

Some of the facts that we can see immediately from this formula are:

\begin{enumerate}
\item $g_\Lambda(-1)=f_\Lambda(0)=0$ (since $E_1$ is always relatively projective)
\item $g_\Lambda(0)=e_n=n!$ (the number of ordered $m$-clusters for $m=0$).
\end{enumerate}

\begin{prop}\label{prop: where are roots of g(x)}
All real roots of the polynomial $g_\Lambda(x)$ are between $0$ and $-1$ including $-1$ and excluding $0$. 
\end{prop}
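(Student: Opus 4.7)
The plan is to carry out a direct sign analysis on the explicit formula
\[
	g_\Lambda(m)=\sum_{k=1}^n e_k(m+1)^km^{n-k},
\]
using only the fact that every $e_k$ is a nonnegative integer and that $e_n=n!>0$ (and that $g_\Lambda(-1)=0$, which is already noted in the excerpt since the sum starts at $k=1$). The argument splits into the four regions $m>0$, $m=0$, $m=-1$, and $m<-1$.

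First, for $m>0$ both factors $(m+1)^k$ and $m^{n-k}$ are strictly positive, so every summand is nonnegative and the $k=n$ summand equals $n!(m+1)^n>0$; hence $g_\Lambda(m)>0$ and no real root lies in $(0,\infty)$. At $m=0$ only the $k=n$ term survives and gives $g_\Lambda(0)=e_n=n!>0$, so $0$ is not a root. At $m=-1$ only the $k=0$ term would survive, but the sum starts at $k=1$, so $g_\Lambda(-1)=0$; this shows $-1$ is a real root, so the set of real roots is nonempty and contains $-1$.

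The remaining case is $m<-1$, where both $m$ and $m+1$ are negative. Here $(m+1)^km^{n-k}$ has sign $(-1)^k(-1)^{n-k}=(-1)^n$, the same sign for every $k$, so
\[
	g_\Lambda(m)=(-1)^n\sum_{k=1}^n e_k\,|m+1|^k|m|^{n-k}.
\]
The sum on the right is a nonnegative combination containing the strictly positive term $n!\,|m+1|^n$, hence is strictly positive; thus $g_\Lambda(m)\ne 0$ for every $m<-1$. Combining the four cases shows that every real root of $g_\Lambda$ lies in $[-1,0)$ and that $-1$ is indeed a root.

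There is no serious obstacle: the entire argument rests on the uniformity of sign of $(m+1)^km^{n-k}$ outside $(-1,0)$ together with the nonnegativity of the coefficients $e_k$. The only place one has to be a little careful is the boundary value $m=0$, where the terms with $k<n$ vanish because of the factor $m^{n-k}$, leaving $e_n=n!$ as the only contribution; this is what rules out $0$ as a root and is why the interval is half-open at the right endpoint.
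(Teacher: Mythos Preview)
Your argument is correct and is essentially the same as the paper's: a direct sign analysis of the summands $e_k(m+1)^k m^{n-k}$ outside the interval $(-1,0)$, using $e_k\ge 0$ and the positivity of the $k=n$ term $n!(m+1)^n$. The paper treats $m\ge 0$ in one stroke rather than splitting off $m=0$, and records the facts $g_\Lambda(-1)=0$ and $g_\Lambda(0)=n!$ just before the proposition rather than inside the proof, but these are purely cosmetic differences.
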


\begin{proof}
If $m\ge 0$ then $g_\Lambda(m)>0$ since all terms are nonnegative: $e_k(m+1)^km^{n-k}\ge0$ and the $k=n$ term is $n!(m+1)^n\ge n!>0$. If $m<-1$ then $(-1)^ng_\Lambda(m)>0$ since all terms are nonnegative and the $k=n$ term is positive.
\end{proof}

The second part of the main theorem, also proved in Section \ref{sec: main theorem} is the following.

\begin{prop}\label{main thm, part 2}
The 1-1 correspondences of Theorem \ref{main thm, part 1} can be chosen to be compatible with deletion of the first term in the sense that, if $(T_1,\cdots,T_k)$ corresponds to $(E_1,\cdots,E_k)$, then $(T_2,\cdots,T_k)$ corresponds to $(E_2,\cdots,E_k)$.
\end{prop}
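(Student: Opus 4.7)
The plan is to construct the family of bijections $\Phi_k$ (one for each length $k$ and each hereditary algebra) by induction on $k$, in such a way that deletion of the first term is built into the recursive definition; then the compatibility asserted in the proposition is automatic.

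For the base case $k=0$ the bijection is trivial. For $k=1$, an $m$-exceptional sequence of length one is, by Definition \ref{def: m-exc seq}, a single object $E_1 = M_1[d_1]$ with $0 \le d_1 \le m$ and with $M_1$ relatively projective in the perpendicular category of the empty subcategory (that is, genuinely projective in $mod\text-\Lambda$) when $d_1 = m$. This matches verbatim the description of a single exceptional object of $\cC^m(\Lambda)$ coming from the fundamental domain (\ref{eq: m-cluster cat, FD}), so $\Phi_1$ may be taken to be the identity.

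For the inductive step $k \ge 2$: given a compatible $k$-tuple $(T_1, T_2, \ldots, T_k)$ in $\cC^m(\Lambda)$, apply the inductively defined $\Phi_{k-1}$ to the tail $(T_2, \ldots, T_k)$ to obtain an $m$-exceptional sequence $(E_2, \ldots, E_k)$ with $E_i = M_i[d_i]$. Let $\cW = (M_2 \oplus \cdots \oplus M_k)^\perp$, a wide subcategory of $mod\text-\Lambda$ equivalent to the module category of a hereditary algebra $\Lambda'$ of smaller rank. The key claim to verify is that pairwise compatibility of $T_1$ with each of $T_2, \ldots, T_k$ in $\cC^m(\Lambda)$ is equivalent to $T_1$ arising, under the induced embedding, from a single exceptional object of $\cC^m(\Lambda')$. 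Granting this, the base case applied over $\Lambda'$ (the identity) converts $T_1$ to a term $E_1$, and we set $\Phi_k(T_1, T_2, \ldots, T_k) := (E_1, E_2, \ldots, E_k)$. The relative projectivity condition on $M_1$ (when $d_1 = m$) in $(M_2 \oplus \cdots \oplus M_k)^\perp = \cW$ is exactly the condition inherited from the length-one sequence in $\Lambda'$, so the result is a valid $m$-exceptional sequence of length $k$ for $\Lambda$.

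With $\Phi_k$ so defined it is a bijection (assembled from the bijective $\Phi_{k-1}$ and the identity on length-one data over $\Lambda'$), and it commutes with deletion of the first term by construction, which is exactly the statement of the proposition. The main obstacle is the embedding claim in the previous paragraph: translating the three clauses of compatibility in $\cC^m(\Lambda)$ (items (1)--(3) following (\ref{eq: m-cluster cat, FD})) into the Hom and Ext vanishing conditions defining $\cW$, and checking that the ``projective when the shift equals $m$'' side conditions match between $\cC^m(\Lambda)$ and $\cC^m(\Lambda')$. This is mostly bookkeeping, but it is slightly delicate because $T_1$ may carry a nonzero shift $j$, so one must case-split on $j$ against the various $d_i$ when converting compatibility in the $m$-cluster category into the perpendicularity conditions used to define $\cW$.
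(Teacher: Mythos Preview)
Your overall architecture---defining $\Phi_k$ so that it agrees with $\Phi_{k-1}$ on the tail, thereby making first-term deletion automatic---is sound, and it is also how the paper organizes the argument. The gap is in your ``key claim'': you assert that compatibility of $T_1$ with each of $T_2,\ldots,T_k$ in $\cC^m(\Lambda)$ is equivalent to $T_1$ arising, via the natural embedding, from an exceptional object of $\cC^m(\Lambda')$ where $mod\text-\Lambda'\simeq \cW=(M_2\oplus\cdots\oplus M_k)^\perp$. This is false, and it is not merely a bookkeeping issue.

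Take $\Lambda=kA_2$ with arrow $1\to 2$, $m=1$, $k=2$, and $T_2=P_1[0]$. Then $E_2=T_2$, $M_2=P_1$, and $\cW=P_1^\perp=\add(S_2)$, so $\cE^1(\cW)=\{S_2[0],\,S_2[1]\}$. On the other hand the objects of $\cC^1(\Lambda)$ compatible with $P_1[0]$ are $S_1[0]$ and $S_2[0]$: the module $S_1$ is ext-orthogonal to $P_1$ but does not lie in $P_1^\perp$ (since $\Hom(P_1,S_1)\neq 0$), while $S_2[1]\in\cE^1(\cW)$ is \emph{not} compatible with $P_1[0]$ (since $\Hom(S_2,P_1)\neq 0$). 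So the set of admissible $T_1$'s and the set $\cE^m(\cW)$ are in bijection, but not via any embedding; in this example the correspondence sends $S_1[0]\leftrightarrow S_2[1]$ through the short exact sequence $S_2\hookrightarrow P_1\twoheadrightarrow S_1$.

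What is actually needed is exactly the content of the paper's Key Lemma (Lemma~\ref{key lemma}): for each $T[k]\in\cE^m(\Lambda)$ there is a nontrivial, compatibility-preserving bijection $\sigma_{T[k]}:\cE^m(T^\perp)\to \cE^{T[k]}$ built from mutation of exceptional pairs, with a separate argument (Lemma~\ref{extension to k=m}) matching relative projectivity in $T^\perp$ with projectivity in $mod\text-\Lambda$ at level $m$. The paper then peels off the \emph{last} term $T_p$, transports $(T_1,\ldots,T_{p-1})$ into $\cE^m(T_p^\perp)$ via $\sigma_{T_p}^{-1}$, and recurses; compatibility with deletion of the \emph{first} term follows because $\sigma_{T_p}^{-1}$ is applied termwise. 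Your recursion on the tail would require the composite $\sigma_{T_k}\circ\cdots\circ\sigma_{T_2}$ all at once to identify $E_1$ from $T_1$, so you cannot avoid this lemma; treating it as ``mostly bookkeeping'' is the substantive error.
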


\subsection{Recursive formula}\label{ss13}

We will show that the recursive formula for the number of exceptional sequences given in \cite{2-Ringel13} can be modified to give a recursive formula for $f_\Lambda(x)$.

We review briefly the argument from \cite{2-Ringel13} which counts the number of exceptional sequences
\[
	e_\Lambda=\sum e_k=f_\Lambda(1)
\]

Let $\Lambda$ be a hereditary algebra of finite type with connected valued quiver $Q$. The number of vertices of $Q$, usually denoted $n$, is the \emph{rank} of $\Lambda$. For each vertex $i$ of $Q$, let $Q(i)$ be the valued quiver obtained from $Q$ by deleting the vertex $i$. Then $Q(i)$ has $1,2$ or $3$ components. To count the number of exceptional sequences for $Q(i)$ we need the following lemma.

\begin{lem}\cite{2-Ringel13}\label{lem: product formula of Ringel}
Let $A,B$ be hereditary algebras of finite type with rank $n_A,n_B$. Then there is a 1-1 correspondence between complete exceptional sequences for $A\times B$ and pairs of complete exceptional sequences for $A,B$ together with a shuffling of the two sequences. Thus
\[
	e_{A\times B}=\binom{n_A+n_B}{n_A} e_Ae_B.
\]
\end{lem}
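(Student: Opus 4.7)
The plan is to exploit the direct product structure: since $A \times B$ is the direct product of the two algebras, its module category splits as $\mathrm{mod}\text-(A\times B) \simeq \mathrm{mod}\text-A \times \mathrm{mod}\text-B$. Every indecomposable $(A\times B)$-module is therefore supported on a single factor, i.e., is either an $A$-module (with $B$ acting as zero) or a $B$-module (with $A$ acting as zero). Crucially, for any $A$-module $X$ and any $B$-module $Y$ we have $\Hom_{A\times B}(X,Y)=0=\Hom_{A\times B}(Y,X)$ and similarly for $\Ext^1$. This means that in any exceptional sequence for $A\times B$, the only nontrivial orthogonality conditions occur between two terms that are both $A$-modules, or both $B$-modules. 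Also, a module $M$ supported on one side is exceptional over $A\times B$ if and only if it is exceptional on that side.

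For the forward direction I would take a complete exceptional sequence $(E_1,\ldots,E_{n_A+n_B})$ for $A\times B$, let $S_A=\{i : E_i\in\mathrm{mod}\text-A\}$ and $S_B=\{i : E_i\in\mathrm{mod}\text-B\}$, and read off the subsequences $\mathbf{E}^A=(E_i)_{i\in S_A}$ and $\mathbf{E}^B=(E_i)_{i\in S_B}$ in the induced order. By the observation above, $\mathbf{E}^A$ inherits the exceptional-sequence conditions within $\mathrm{mod}\text-A$, and similarly for $\mathbf{E}^B$. Since an exceptional sequence in $\mathrm{mod}\text-A$ has length at most $n_A$ and similarly for $B$, and the two lengths must add to $n_A+n_B$, both subsequences are forced to be complete. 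The partition $(S_A,S_B)$ of $\{1,\ldots,n_A+n_B\}$ is exactly the data of a shuffling, determined by the subset $S_A$ of size $n_A$.

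For the reverse direction, given complete exceptional sequences $\mathbf{E}^A=(F_1,\ldots,F_{n_A})$ for $A$ and $\mathbf{E}^B=(G_1,\ldots,G_{n_B})$ for $B$, together with a choice of $n_A$-subset $S_A\subset\{1,\ldots,n_A+n_B\}$, one interleaves them by placing the $F$'s in the positions indexed by $S_A$ in order and the $G$'s in the remaining positions in order. The resulting sequence is exceptional over $A\times B$: orthogonality between an $A$-term and a $B$-term is automatic from the cross-vanishing of $\Hom$ and $\Ext^1$, while orthogonality among $A$-terms (resp. $B$-terms) follows from exceptionality of $\mathbf{E}^A$ (resp. $\mathbf{E}^B$), because the order of the $A$-terms (resp. $B$-terms) within the shuffled sequence agrees with their order in $\mathbf{E}^A$ (resp. $\mathbf{E}^B$). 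These two constructions are visibly inverse to each other up to isomorphism, so the bijection is established, and counting shufflings gives the factor $\binom{n_A+n_B}{n_A}$.

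There is no serious obstacle: the only point that needs care is the length bookkeeping in the forward direction, i.e., confirming that $|S_A|=n_A$ and $|S_B|=n_B$. This uses the standard fact that a complete exceptional sequence over a hereditary algebra of finite type has length equal to the rank, applied to both $A$ and $B$ and to $A\times B$ (whose rank is $n_A+n_B$).
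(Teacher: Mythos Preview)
Your argument is correct. Note, however, that the paper does not supply its own proof of this lemma: it is simply quoted from \cite{2-Ringel13} and then used, so there is nothing in the paper to compare your proof against. Your write-up is the standard argument and would be an appropriate proof to insert if one were desired.
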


Since a term in an exceptional sequence for $A\times B$ is relatively projective if and only if it is relatively projective in $A$ or $B$ whichever it comes from, we get the following.

\begin{equation}\label{formula for A x B}
f_{A\times B}(x)=\binom{n_A+n_B}{n_A} f_A(x)f_B(x)
\end{equation}

The recursive formula for $e_\Lambda$ is:

\begin{prop}\cite{2-Ringel13}\label{prop: recursive formula for eL}
For $\Lambda$ of rank $n$ and Coxeter number $h$ we have:
\[
	e_\Lambda=\sum_{i=1}^n \frac h2 e_{\Lambda(i)}
\]
\end{prop}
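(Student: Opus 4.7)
The plan is to count complete exceptional sequences by peeling off the first term.  Given a complete exceptional sequence $(E_1,\ldots,E_n)$ in $mod\text-\Lambda$, the tail $(E_2,\ldots,E_n)$ is a complete exceptional sequence in the perpendicular category $E_1^\perp$.  Schofield's theorem on perpendicular subcategories (see \cite{6-CB}) identifies $M^\perp$, for any exceptional $M$, with $mod\text-\Lambda_M$ for a hereditary algebra $\Lambda_M$ of rank $n-1$, and moreover each such $\Lambda_M$ is isomorphic to one of the vertex-deletion algebras $\Lambda(i)$.  This gives
\[
  e_\Lambda = \sum_{M \text{ exceptional}} e_{\Lambda_M} = \sum_{i=1}^n c_i\, e_{\Lambda(i)},
\]
where $c_i$ counts isomorphism classes of exceptional $\Lambda$-modules $M$ with $\Lambda_M \cong \Lambda(i)$.

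What remains is the weighted identity
\[
  \sum_{i=1}^n c_i\, e_{\Lambda(i)} = \frac{h}{2}\sum_{i=1}^n e_{\Lambda(i)}.
\]
A first sanity check is $\sum_i c_i = nh/2$, which is the classical count of positive roots of $Q$ (equivalently, of exceptional $\Lambda$-modules in Dynkin type).  To extract the full identity I would construct, for each vertex $i$, a family $\cE_i$ of exceptional modules of cardinality $h/2$ with $\Lambda_M \cong \Lambda(i)$ for every $M \in \cE_i$, so that the $\cE_i$ partition the set of exceptional modules.  A natural candidate is built from the $\tau^{-1}$-orbit of the indecomposable projective $P_i$ in the AR quiver, suitably refined using the Coxeter element so that each such orbit contributes exactly $h/2$ exceptional modules.

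The main obstacle is this balancing step.  Distinct vertices $i \ne j$ can have $\Lambda(i) \cong \Lambda(j)$, and the raw $\tau$-orbits in the AR quiver do not in general have uniform size, so the partition must be chosen with care.  Ringel's original argument in \cite{2-Ringel13} completes the count via an explicit analysis of the AR quiver and the Coxeter functor across the Dynkin classification.  A more uniform route I would try is to proceed by induction on $n$ and verify both sides of the recursion against Chapoton's closed formula $e_\Lambda = n!h^n/\prod d_i$, reducing the identity to a classical product identity on the fundamental degrees of the Weyl group and its maximal parabolic subgroups; this would avoid case-by-case work but requires a non-trivial input from the invariant theory of reflection groups.
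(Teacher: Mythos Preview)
Your overall strategy agrees with the paper's: write $e_\Lambda=\sum_M e_{\Lambda_M}$ by stripping one term from each complete exceptional sequence, then group the exceptional modules $M$ according to which $\Lambda(i)$ their perpendicular category realizes. One small slip: peeling off the \emph{first} term puts the tail $(E_2,\ldots,E_n)$ in ${}^\perp E_1$, not in $E_1^\perp$, under the paper's convention. The paper instead peels off the \emph{last} term $E_n$, so that $(E_1,\ldots,E_{n-1})$ lies in $E_n^\perp$. Either choice works, but keep the side of the perpendicular consistent.

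The genuine gap is the balancing step, and the resolution is \emph{not} a case-by-case analysis as you suggest. The uniform fact from \cite{2-Ringel13} that the paper invokes is this: for each vertex $i$ there is a unique vertex $j$ with $I_j$ in the $\tau$-orbit of $P_i$; the $\tau$-orbits of $P_i$ and $P_j$ together contain exactly $h$ indecomposables; and for every $M$ in this union one has $M^\perp\cong P_i^\perp\cong P_j^\perp\cong\Lambda(i)$. Summing over all $i$ therefore counts each indecomposable exactly twice, giving $2e_\Lambda=h\sum_i e_{\Lambda(i)}$. Note that your proposed partition of the exceptional modules into sets $\cE_i$ each of size $h/2$ cannot exist literally when $h$ is odd (already in $A_2$ one has $h=3$ and orbit lengths $1$ and $2$); what exists is the double cover by \emph{paired} orbits, and the phenomenon you flagged as an obstacle---that $\Lambda(i)\cong\Lambda(j)$ for $i\neq j$---is precisely the mechanism that makes the pairing work. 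Your alternative route via Chapoton's closed formula would also succeed in principle, but it imports a much deeper result to prove an elementary recursion, and you would need to be careful that the proof of $e_\Lambda=n!h^n/\prod d_i$ you cite does not itself rest on this recursion.
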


\begin{proof} (Summarized from \cite{2-Ringel13}) If $P_i$ and $I_j$ are in the same orbit of the Auslander-Reiten translation $\tau$ then the sum of the lengths of the $\tau$ orbits of $P_i$ and $P_j$ is $h$. Furthermore, for any indecomposable module $M$ in the union of these $\tau$ orbits, $M^\perp$, $P_i^\perp$ and $P_j^\perp$ are all isomorphic to $\Lambda(i)$. Since this counts each indecomposable module twice and since every complete exceptional sequence for $\Lambda$ is given by an indecomposable module $M$ preceded by a complete exceptional sequence for $M^\perp$, we get:
\[
	2e_\Lambda=2\sum_M e_{M^\perp}=h\sum_i e_{\Lambda(i)}.
\]
The formula in the Proposition follows.
\end{proof}

In the above proof, in the union of the $\tau$ orbits of $P_i$ and $P_j$, the proportion of objects which are projective is exactly $\frac2h$. This gives the following.

\begin{cor}\label{cor: recursive formula for f(x)}
Let $\Lambda,n,h$ be as above. Then
\[
	f_\Lambda(x)=\left(
	x+\frac h2-1
	\right)\sum_{i=1}^n f_{\Lambda(i)}(x)
\]
or, equivalently,
\[
	{g_\Lambda(m)=\frac{hm+2}2 \sum_{i=1}^n g_{\Lambda(i)}(m)}.
\]
\end{cor}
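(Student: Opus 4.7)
The plan is to adapt Ringel's counting argument from Proposition \ref{prop: recursive formula for eL} by keeping track of the number of relatively projective terms. The key observation is that if $(E_1,\dots,E_n)$ is a complete exceptional sequence with last term $E_n=M$, then the truncation $(E_1,\dots,E_{n-1})$ is a complete exceptional sequence in the perpendicular category $M^\perp$, and for each $j<n$ the perpendicular $(E_{j+1}\oplus\cdots\oplus E_n)^\perp$ in $mod\text-\Lambda$ is literally the same wide subcategory as $(E_{j+1}\oplus\cdots\oplus E_{n-1})^\perp$ computed inside $M^\perp$. Hence the relative projectivity of each $E_j$ with $j<n$ is unaffected by the ambient category, while $E_n=M$ itself is relatively projective (its perpendicular being all of $mod\text-\Lambda$) if and only if $M$ is a projective $\Lambda$-module.

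This immediately gives the identity
\[
	f_\Lambda(x)=\sum_{M}x^{\epsilon(M)}f_{M^\perp}(x)
\]
where $M$ runs over all indecomposable $\Lambda$-modules and $\epsilon(M)=1$ if $M$ is projective, $0$ otherwise. I would then split the sum as
\[
	f_\Lambda(x)=\sum_M f_{M^\perp}(x)+(x-1)\sum_{M\text{ projective}}f_{M^\perp}(x),
\]
and evaluate each piece using Ringel's pairing. Namely, as in the proof of Proposition \ref{prop: recursive formula for eL}, the indecomposable modules partition into paired $\tau$-orbits of $P_i$ and $P_j$ of total length $h$, each pair containing exactly the two projectives $P_i,P_j$, and with $M^\perp\cong\Lambda(i)$ for every $M$ in the pair. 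Summing over all $M$ while double-counting pairs via $\sum_{i=1}^n$ gives $\sum_M f_{M^\perp}(x)=\tfrac{h}{2}\sum_i f_{\Lambda(i)}(x)$, whereas summing only over the projective $M$ gives $\sum_{M\text{ proj}}f_{M^\perp}(x)=\sum_i f_{P_i^\perp}(x)=\sum_i f_{\Lambda(i)}(x)$. Combining yields
\[
	f_\Lambda(x)=\Bigl(\tfrac{h}{2}+(x-1)\Bigr)\sum_{i=1}^n f_{\Lambda(i)}(x)=\Bigl(x+\tfrac{h}{2}-1\Bigr)\sum_{i=1}^n f_{\Lambda(i)}(x),
\]
which is the first formula. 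The equivalent formula for $g_\Lambda(m)$ is then a routine substitution: since $g_\Lambda(m)=m^n f_\Lambda(\tfrac{m+1}{m})$ and each $\Lambda(i)$ has rank $n-1$, one checks $\tfrac{m+1}{m}+\tfrac{h}{2}-1=\tfrac{hm+2}{2m}$ and factors out the appropriate power of $m$.

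The steps are all essentially bookkeeping; the only nontrivial point is the first one, namely checking that relative projectivity transfers verbatim to the perpendicular category $M^\perp$. This is where I expect a careful but short argument is needed, since the definition of relative projectivity depends on the ambient category in principle, and one must notice that the ambient categories produce the same perpendicular subcategory to conclude that nothing changes. Once that is in place, the rest follows by copying Ringel's $\tau$-orbit bookkeeping and tracking the extra factor of $x$ contributed by the two projectives in each paired orbit.
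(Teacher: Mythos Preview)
Your proposal is correct and follows essentially the same approach as the paper: the paper's entire argument is the one-line observation that in the union of the paired $\tau$-orbits of $P_i$ and $P_j$ exactly $2$ of the $h$ modules are projective, so weighting the last term $M$ by $x$ when $M$ is projective turns Ringel's recursion $2e_\Lambda=h\sum_i e_{\Lambda(i)}$ into $2f_\Lambda(x)=(2x+h-2)\sum_i f_{\Lambda(i)}(x)$. Your write-up is in fact more careful than the paper's, since you make explicit the point (which the paper leaves implicit) that relative projectivity of $E_j$ for $j<n$ is unchanged when passing from $mod\text-\Lambda$ to $M^\perp$ because the relevant perpendicular subcategories literally coincide.
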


%
%

\section{Proof of main theorem}\label{sec: main theorem}

This section gives the statement and proof of the main theorem.

\subsection{Statement of main theorem}\label{ss31}

\begin{thm}\label{main thm}
There is a bijection between the set of isomorphism classes of $m$-exceptional sequences of length $k$ and the set of isomorphism classes of ordered $k$-tuples of pairwise compatible objects in the $m$-cluster category for any finite dimensional hereditary algebra $\Lambda$. Furthermore, this bijection is compatible with deletion of the first term in the sense that, if $(E_1,\cdots,E_k)$ corresponds to $(T_1,\cdots,T_k)$, then $(E_2,\cdots,E_k)$ corresponds to $(T_2,\cdots,T_k)$.
\end{thm}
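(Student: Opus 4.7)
The plan is to induct on the length $k$, constructing the bijection and its inverse in parallel so that compatibility with first-term deletion is automatic. Throughout, by ``fundamental domain'' I mean the subcategory \eqref{eq: m-cluster cat, FD} of $\cD^b(mod\text-\Lambda)$ chosen to represent $\cC^m(\Lambda)$.

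For $k=1$, an $m$-exceptional sequence of length one is a pair $(M_1,d_1)$ with $M_1$ an exceptional $\Lambda$-module and $0\le d_1\le m$, with the condition that $M_1$ be projective in $mod\text-\Lambda$ when $d_1=m$ (the right perpendicular of the empty suffix being all of $mod\text-\Lambda$). These are precisely the indecomposable exceptional objects of $\cC^m(\Lambda)$ in the fundamental domain, so the identity map gives the bijection.

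For the inductive step, suppose the bijection is known in length $k-1$ over every finite dimensional hereditary algebra. Given pairwise compatible $(T_1,\ldots,T_k)$ in $\cC^m(\Lambda)$, apply the inductive hypothesis to $(T_2,\ldots,T_k)$ to obtain $(E_2,\ldots,E_k)=(M_2[d_2],\ldots,M_k[d_k])$. Set $\cW=(M_2\oplus\cdots\oplus M_k)^\perp$, which by \cite{6-CB} is the module category of a hereditary algebra of strictly smaller rank. The central assertion is that the indecomposable exceptional objects of $\cC^m(\Lambda)$ compatible with each of $T_2,\ldots,T_k$ correspond bijectively, with shift grading preserved, to the indecomposable exceptional objects of $\cC^m(\cW)$ in its fundamental domain. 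Granting this, $T_1$ matches some $M_1[d_1]$ in $\cC^m(\cW)$, and the base case applied to $\cW$ reinterprets this as an $m$-exceptional sequence of length one for $\cW$. Prepending $E_1=M_1[d_1]$ to $(E_2,\ldots,E_k)$ produces a sequence that meets the three conditions of Definition \ref{def: m-exc seq}: the shifts lie in $[0,m]$; because $M_1\in\cW$ we have $\Hom_\Lambda(M_j,M_1)=0=\Ext^1_\Lambda(M_j,M_1)$ for all $j\ge 2$, so $(M_1,\ldots,M_k)$ is an exceptional sequence in $mod\text-\Lambda$; and when $d_1=m$, the requirement that $M_1$ be projective in $\cW$ is exactly the statement that $E_1$ is relatively projective in $(M_2\oplus\cdots\oplus M_k)^\perp$, while the relative-projectivity of later terms with $d_j=m$ is part of the inductive hypothesis. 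The inverse simply reverses each step, and the deletion-compatibility of Proposition \ref{main thm, part 2} is built into the recursion.

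The sole real difficulty is the central assertion of the inductive step: the ``compatibility perpendicular'' of a partial pairwise-compatible set $\{T_2,\ldots,T_k\}$ in $\cC^m(\Lambda)$ is equivalent, as a category together with its fundamental domain and shift grading, to $\cC^m(\cW)$. For $m=1$ reductions of this form underlie the signed-exceptional-sequence argument of \cite{1-IT13}; for general $m$ one needs the corresponding $m$-Calabi--Yau reduction, and this is precisely the technical input supplied by the joint work with Shujian Chen \cite{CI}. Once that identification is in place, everything else is a tidy inductive packaging, and the first-term-deletion statement follows at no extra cost.
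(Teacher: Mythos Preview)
Your inductive scaffold is close to the paper's, but the ``central assertion'' as you state it is false, and this is not a detail you can outsource. You claim that the exceptional objects of $\cC^m(\Lambda)$ compatible with $T_2,\ldots,T_k$ correspond to the exceptional objects of $\cC^m(\cW)$ \emph{with shift grading preserved}. They do not. Already for $k=2$: if $T_2=T[0]$ and $T_1=Y[0]$ with $Y$ a quotient of some $T^s$, then $T_1,T_2$ are ext-orthogonal (hence compatible at the same level), but the object of $\cE^m(T^\perp)$ corresponding to $T_1$ is $X[1]$, where $X=\ker(T^s\twoheadrightarrow Y)$. Both the module and the shift change. The actual bijection $\sigma_{T[k]}:\cE^m(T^\perp)\to\cE^{T[k]}$ (the paper's Lemma~\ref{key lemma}) is built case-by-case from the mutation $\gamma:\cE(T^\perp)\cong\cE(^\perp T)$, it shifts levels on the subset $\cA$ of objects embedding in a power of $T$, and proving it respects compatibility requires real work (the four cases of Proposition~\ref{sigma preserves compatibility}). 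None of this comes from \cite{CI}, which concerns relative projectivity/injectivity of terms in exceptional sequences and is not an $m$-Calabi--Yau reduction statement; invoking it here does not supply the bijection you need.

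There is also a structural difference worth noting. You induct by stripping the \emph{first} term, which makes deletion-compatibility automatic but forces your ``central assertion'' to handle $k-1$ objects at once. The paper instead peels off the \emph{last} term: it proves the key lemma for a single $T[k]$, applies $\sigma_{T[k]}^{-1}$ to push $T_1,\ldots,T_{k-1}$ into $\cE^m(T^\perp)$, and then recurses in the smaller category. First-term-deletion compatibility then follows because $\sigma_{T[k]}^{-1}$ is applied uniformly to all earlier terms. Your scheme could be made to work, but only after you prove the single-object key lemma and then iterate it --- at which point you have reproduced the paper's argument.
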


We will set up notation to give an explicit formula for this bijection. We choose one object from every isomorphism class of exceptional $\Lambda$-modules. Let $\cE(\Lambda)$ denote the set of these objects. For set theoretic purposes we can consider this to be a subset of $\ZZ^n$ since every exceptional module $M$ is determined up to isomorphism by its dimension vector $\undim M\in\ZZ^n$.

For any finitely generated wide subcategory $\cW$ of $mod\text-\Lambda$ (defined in section \ref{ss11}), let $\cE(\cW)=\cW\cap \cE(\Lambda)$. Denote by $\cC^m(\cW)$ the $m$-cluster category of $\cW$. Objects of $\cC^m(\cW)$ will be represented by elements of the fundamental domain of the functor $\tau^{-1}[m]$ on the bounded derived category of $\cW$, as in \eqref{eq: m-cluster cat, FD}. Thus the exceptional objects of $\cC^m(\cW)$ are (up to isomorphism) $X[j]$ where $X\in\cE(\cW)$ and $0\le j\le m$ with the additional condition that $X$ is a relative projective object of $\cW$ when $j=m$. Let $\cE^m(\cW)$ denote this set of objects. We call $j$ the \emph{level} of the object $X[j]$. Let $\cE^m_j(\cW)$ denote the set of objects of $\cE^m(\cW)$ of level $j$. In particular, the \emph{rank} $r\le n$ of $\cW$ is the number of elements of $\cE_m^m(\cW)$. We use the abbreviation $\cE^m(\Lambda)=\cE^m(mod\text-\Lambda)$. We consider the special case $\cW=T^\perp$ where $T\in\cE(\Lambda)$. This is a wide subcategory of rank $n-1$. We note that $T$ is uniquely determined by $T^\perp$ since $T$ is the unique exceptional object of $^\perp(T^\perp)$.

We use the expression \emph{exceptional pair} for an exceptional sequence of length $2$ in $mod\text-\Lambda$. We say that two are \emph{equivalent}: $(X,Y)\sim (Z,W)$ if $(X\oplus Y)^\perp=(Z\oplus W)^\perp$, i.e., if they \emph{span} the same rank 2 wide subcategory $\,^\perp\left((X\oplus Y)^\perp\right)$ of $mod\text-\Lambda$. 


\subsection{The key lemma}\label{ss32}
We state the key lemma, which uses the following notation, and show how it proves the main theorem. 

For any $T[k]\in\cE^m(\Lambda)$, let $\cE^{T[k]}\subseteq \cE^m(\Lambda)$ denote the set of all $X[j]\in \cE^m(\Lambda)$ which are compatible with $T[k]$. Recall that $A,B$ are compatible if $A\not\cong B$ and $\Hom(A,B[s])=0=\Hom(B,A[s])$ for all $s>0$. For $A=X[j],B=T[k]$ this is equivalent to one of the following.\begin{enumerate}
\item $j<k$ and $(X,T)$ is an exceptional pair,
\item $j>k$ and $(T,X)$ is an exceptional pair or
\item $j=k$ and $T,X$ are ext-orthogonal and nonisomorphic in $mod\text-\Lambda$.
\end{enumerate}

\begin{lem}\label{key lemma}
For any $T[k]\in\cE^m(\Lambda)$ there is a bijection
\[
	\sigma_{T[k]}:\cE^m(T^\perp)\to\cE^{T[k]}.
\]
Furthermore, $A,B$ are compatible in $\cE^m(T^\perp)$ if and only if $\sigma_{T[k]}A,\sigma_{T[k]}B$ are compatible in $\cE^{T[k]}$.
\end{lem}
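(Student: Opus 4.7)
The plan is to construct $\sigma_{T[k]}$ explicitly, using right mutation of exceptional pairs through $T$ as the main tool, and then verify bijectivity and compatibility preservation. For each $X \in \cE(T^\perp)$ the pair $(X, T)$ is exceptional, and its right mutation produces an object $R_T X$ in $\cD^b(mod\text-\Lambda)$ fitting into the triangle $R_T X \to X \to \mathrm{RHom}(X,T)^* \otimes T$. Unwinding this triangle case by case on the $\Hom$/$\Ext^1$ profile of $(X,T)$, the object $R_T X$ is either $X$ itself (when $X \in T^\perp \cap {}^\perp T$), a module of ${}^\perp T$, or a module of ${}^\perp T$ shifted by $-1$. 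Writing $\mu_T(X) \in \cE({}^\perp T)$ for the underlying module and $s(X) \in \{-1, 0\}$ for the shift, we have $R_T X = \mu_T(X)[s(X)]$.

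I would then define $\sigma_{T[k]}(X[j])$ by cases. For $j < k$, set $\sigma_{T[k]}(X[j]) = X[j]$: this lies in $\cE^{T[k]}$ via compatibility condition (1), since $(X,T)$ is an exceptional pair. For $j = k$, set $\sigma_{T[k]}(X[j]) = X[j]$ whenever $\Ext^1(X,T) = 0$ so that $X$ is ext-orthogonal to $T$ and condition (3) applies; otherwise, set $\sigma_{T[k]}(X[j]) = \mu_T(X)[k]$. For $j > k$, let $\sigma_{T[k]}(X[j])$ be the fundamental-domain representative of $R_T X[j]$, which is $\mu_T(X)[j + s(X)]$. In each case one verifies directly that the target lies in $\cE^{T[k]}$ via the appropriate compatibility condition (2) or (3) and that the shifted level remains in $\{0, 1, \ldots, m\}$.

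The inverse $\sigma_{T[k]}^{-1}$ is built analogously using left mutation from ${}^\perp T$ back to $T^\perp$, with case-by-case verification that the two maps compose to the identity. Compatibility preservation then follows because mutation is induced by an equivalence of certain derived perpendicular subcategories that preserves all $\mathrm{RHom}$ complexes: for $A, B \in \cE^m(T^\perp)$, the $\Ext$-vanishing conditions detecting compatibility inside $\cD^b(T^\perp)$ coincide with the corresponding $\Ext$-vanishing conditions between $\sigma_{T[k]}(A)$ and $\sigma_{T[k]}(B)$ inside $\cD^b(mod\text-\Lambda)$, so that all three compatibility conditions transport cleanly across the bijection.

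The main obstacle will be verifying that the relative-projectivity constraints at level $m$ are correctly matched under the construction. An object $X$ relatively projective in $T^\perp$ is in general not projective in $mod\text-\Lambda$, and $\mu_T(X) \in {}^\perp T$ likewise need not be projective in $mod\text-\Lambda$, yet condition (2) in the definition of $\cE^m(\Lambda)$ demands exactly that projectivity at level $m$ of the target. One must check that every level-$m$ source object either lands at level $m$ of the target with image genuinely projective in $mod\text-\Lambda$, or else lands at level $k$ via the downward shift $s(X) = -1$, satisfying condition (3) instead. This bookkeeping across the three regimes $j < k$, $j = k$, $j > k$ and the various $\Hom$/$\Ext$ profiles is the delicate step that completes the proof.
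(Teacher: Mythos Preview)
Your construction of $\sigma_{T[k]}$ is essentially the paper's: the paper also uses the identity below level $k$, the mutation $\gamma:\cE(T^\perp)\to\cE({}^\perp T)$ (your $\mu_T$) above level $k$ with a level drop exactly in the ``$\cA$-case'' $X\hookrightarrow T^s$ (your $s(X)=-1$), and the same split rule at level $k$. You are also right that the level-$m$ projectivity match is delicate; the paper isolates this as a separate lemma (Lemma~\ref{extension to k=m}) showing that for $X\in\cB$ the image $\beta(X)$ is projective in $mod\text-\Lambda$ if and only if $X$ is relatively projective in $T^\perp$, and the proof is not formal.

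The genuine gap is your compatibility argument. Saying ``mutation is induced by an equivalence of derived perpendicular subcategories preserving $\mathrm{RHom}$'' does not suffice, because $\sigma_{T[k]}$ is \emph{not} induced by a single derived functor: on some objects it is the identity, on others it is $\gamma$, and at level $k$ the two rules are mixed according to whether $\Ext^1(X,T)$ vanishes. So to compare compatibility of $\sigma(A)$ and $\sigma(B)$ you must handle cross-terms where one object has been mutated and the other has not. The paper does this by a four-case analysis on the pairs $(i,i')$ and $(j,j')$ of input/output levels (Proposition~\ref{sigma preserves compatibility}), supported by two auxiliary lemmas: one showing that for $X,X'\in T^\perp$ the pairs $(X,X')$, $(X,\gamma(X'))$, $(\gamma(X),\gamma(X'))$ are simultaneously exceptional (via the braid relation on $(X,X',T)$), and a second tracking signs in the linear relation among dimension vectors to control the equal-level ext-orthogonality case. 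The mixed cases at level $k$ and the cases where one level drops and the other does not require explicit short-exact-sequence computations, not just functoriality. Your outline would need to supply these.
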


\begin{proof}[Proof of Theorem \ref{main thm} given Lemma \ref{key lemma}]
We show that the bijections $\sigma_{T[k]}$ give bijections 
\[
	\theta_p:\{\text{$p$-tuples of compatible objects in $\cE^m(\Lambda)$}\}\to \{m\text{-exceptional sequences of length $p$ for $\Lambda$}\}
\]
compatible with deletion of the first term. The bijections $\theta_p$ are defined recursively as follows. For $p=1$ let $\theta_1$ be the identity mapping. Next, take $p\ge2$.

Let $(T_1,\cdots,T_p)$ be an ordered $p$-tuple of compatible objects in $\cE^m(\Lambda)$ and let $T_p=T[k]$ be the last object. Then, $T_i$ for $i<p$ are compatible elements of $\cE^{T[k]}$. So, by the lemma, $\sigma_{T[k]}^{-1}(T_i)$ are compatible objects of $\cE^m(T^\perp)$. So, $\sigma_{T[k]}^{-1}(T_1,\cdots,T_{p-1})=(X_1,\cdots,X_{p-1})$ is an ordered $(p-1)$-tuple of compatible objects in the wide subcategory $T^\perp$ of $mod\text-\Lambda$. Since $T^\perp$ is isomorphic to $mod\text-\Lambda'$ for some hereditary algebra $\Lambda'$, by induction on $p$, we have the bijection
\[
	\theta_{p-1}:\{\text{$(p-1)$-tuples of compatible objects in $\cE^m(T^\perp)$}\}\to \{m\text{-exc. seq. of length $p-1$ for $T^\perp$}\}
\]
which is given by $\theta_{p-2}$ when first terms are deleted. Let
\[
	\theta_{p-1}\left(
	X_1,\cdots,X_{p-1}
	\right)=(E_1,\cdots,E_{p-1})
\]
be the corresponding $m$-exceptional sequence in $T^\perp$. Then we define $\theta_p$ by
\[
	\theta_p(T_1,\cdots,T_p)=(E_1,\cdots,E_{p-1},T[k]).
\]
Since $\theta_{p-2}(T_2,\cdots,T_{p-1})=(X_2,\cdots,X_{p-1})$, it follows that $\theta_{p-1}(T_2,\cdots,T_p)=(E_2,\cdots,E_p)$ as required.

Conversely, let $(E_1,\cdots,E_p)$ be an $m$-exceptional sequence for $\Lambda$. Let $E_p=T[k]$. Then $(E_1,\cdots,E_{p-1})$ is an $m$-exceptional sequence in $T^\perp$ and
\[
	\theta_{p-1}^{-1}(E_1,\cdots,E_{p-1})=(X_1,\cdots,X_{p-1})
\]
is a $(p-1)$-tuple of compatible objects in $T^\perp$. 
Applying $\sigma_{T[k]}$ gives $\sigma_{T[k]}(X_1,\cdots,X_{p-1})=(T_1,\cdots,T_{p-1})$ a $(p-1)$-tuple of compatible objects in $\mL$. So, $\theta_p^{-1}(E_1,\cdots,E_p)=(T_1,\cdots,T_p)$. By construction the maps $\theta_p$, $\theta_p^{-1}$ are inverse to each other. Since the maps $\theta_p$ are compatible with deletion of the first term, so are the maps $\theta_p^{-1}$. Thus the key lemma implies the main theorem.
\end{proof}


\subsection{Outline of proof of key lemma}\label{ss33}

We will construct the bijection
\[
	\sigma_{T[k]}:\cE^m(T^\perp)\to \cE^{T[k]}
\]
by decomposing each set into a disjoint union of subsets and constructing bijections between corresponding subsets as shown schematically as follows.
\begin{equation}\label{bijection chart}
\begin{array}{ccccccccccccccc}
\cE^m(T^\perp)&=&\cE_0 & \cup \cdots  \cup & \cE_{k-1} &\cup &\cB_k&\cup &\cA_{k+1}&\cup&\cB_{k+1}&\cup\cdots\cup&\cA_m&\cup&\cB_m\\
&&\downarrow &&  \downarrow &&\downarrow&&\downarrow&&\downarrow&&\downarrow &&\downarrow \\
\cE^{T[k]}&=&\cE_0' & \cup \cdots \cup & \cE_{k-1}'&\cup &\cB_k'&\cup &\cA_k'&\cup&\cB_{k+1}'&\cup\cdots\cup&\cA_{m-1}'&\cup&\cB_m'
\end{array}
\end{equation}
Here $\cE_j=\cE^m_j(T^\perp)$ and $\cE_j'=\cE_j^{T[k]}$ denote the subsets of $\cE^m(T^\perp)$ and $\cE^{T[k]}$, resp, of objects of level $j$. It follows from the definitions that, for $j<k$, $\cE_j=\cE_j'$. So, we take the identity mapping
\[
	\sigma_{T[k]}|_{\cE_j}=id:\cE_j\xrightarrow= \cE_j'\,, \quad 0\le j<k.
\]

With the exception of the bijection $\cB_k\to \cB'_k$, all other bijections in Chart \ref{bijection chart} are given by mutation of exceptional sequences $(X,T)\leftrightarrow(T,Y)$. This gives a bijection $\gamma:\cE(T^\perp)\cong \cE(^\perp T)$ where the formula for $Y=\gamma(X)$ is given by examining four cases. We pick out one case of this bijection. Let $\cA\subseteq \cE(T^\perp)$ and $\cA'\subseteq \cE(^\perp T)$ be given by
\[
	\cA=\{X\in\cE(T^\perp)\,:\, \exists \text{ mono }X\into T^s\}
\]
\[
	\cA'=\{Y\in\cE(^\perp T) \,:\, \exists \text{ epi }T^s\onto Y\}.
\]
The bijection $\gamma:\cE(T^\perp)\cong \cE(^\perp T)$ sends $\cA$ onto $\cA'$ and gives a bijection $\alpha:\cA\cong \cA'$. Corresponding objects $X\in\cA\leftrightarrow Y\in \cA'$ are related by the canonical short exact sequence
\[
	0\to X\xrightarrow f T^s\xrightarrow g Y\to 0
\]
where $f,g$ are minimal left/right $T$-approximations of $X,Y$ respectively. Let $\cB=\cE(T^\perp)-\cA$ and $\cB'=\cE(^\perp T)-\cA'$ be the union of the other cases. Thus, $\gamma$ induces a bijection $\beta:\cB\cong \cB'$.

For $j>k$ we define $\sigma_{T[k]}(X[j])$ by
\begin{equation}\label{formula for s}
	\sigma_{T[k]}(X[j])=\begin{cases} \alpha(X)[j-1] & \text{if } X\in \cA\\
   \beta(X)[j] & \text{otherwise}
    \end{cases}
\end{equation}
We will show that this gives a bijection
\[
	\sigma_{T[k]}|_{\cA_j}=\alpha_j:\cA_j\to \cA_{j-1}'
\]
between $\cA_j=\{X[j]\,:\, X\in\cA\}$ and $\cA_{j-1}'=\{Y[j-1]\,:\, Y\in\cA'\}$ for $k<j\le m$. We will verify that $\cA_j \subseteq \cE^m_j(T^\perp)$ for all $k< j\le m$ and $\cA_j'\subseteq \cE_j^{T[k]}$ for all $k\le j<m$.

We will show that, for $k<j\le m$, Formula \ref{formula for s} also gives a bijection
\[
	\sigma_{T[k]}|_{\cB_j}=\beta_j:\cB_j\to\cB_j'
\]
where $\cB_j=\cE_j-\cA_j$ and $\cB_j'=\cE_j'-\cA_j'$.

For $j=k\le m$ let $\cB_k=\cE_k$ and $\cB_k'=\cE_k'-\cA_k'$. The bijection 
\[
	\sigma_{T[k]}|_{\cB_k}=\beta_k:\cB_k\to \cB'_k
\]
is constructed by a special argument which can be summarized by saying that $\beta_k(X[k])$ is either $X[k]$ or $\gamma(X)[k]$, whichever lies in the set $\cB_k'$. A similar description summarizes the bijection $\sigma_{T[k]}$ on all objects of $\cE^m(T^\perp)$ (See Proposition \ref{prop summarizing bijection sigma-T[k]}).

This completes the description of each object and each morphism in Chart \ref{bijection chart}. The final step is to show that $\sigma_{T[k]}$ preserves compatibility. I.e., $X[j],X'[j']$ are compatibility in $\cE^m(T^\perp)$ if and only if $\sigma_{T[k]}(X[j])$, $\sigma_{T[k]}(X'[j'])$ are compatibility in $\cE^m{\Lambda}$. This will follow from the fact that $X,X',T$, appropriately order, form an exceptional sequence.


\subsection{The bijection $\alpha_j:\cA_j\to\cA_{j-1}'$}\label{ss34}

Recall from the previous section that $\cA$ is the set of all $X\in\cE(T^\perp)$ so that there is a monomorphism $X\into T^s$. We need the following observation.

\begin{rem}\label{X is rel projective} Any monomorphism $X\into T^s$ induces an epimorphism $\Ext^1_\Lambda(T^s,Z)\onto \Ext^1_\Lambda(X,Z)$ for any $Z$ since $\Lambda$ is hereditary. In particular, $\Ext^1_\Lambda(X,Z)=0$ for $Z\in T^\perp$. For $X\in T^\perp$ this implies that $X$ is a relatively projective object of $T^\perp$. 
\end{rem}

The standard bijection $\gamma:\cE(T^\perp)\cong \cE(^\perp T)$ sends any $X\in\cA$ to $\gamma(X)=Y$, the cokernel of the minimal $T$-approximation of $X$:
\[
	0\to X\xrightarrow fT^s\to Y\to 0
\]
We call this bijection $\alpha:\cA\to\cA'=\gamma(\cA)$. Recall (from the previous section) that, for $k<j\le m$, $\cA_j$ denotes the set of all $X[j]$ where $X\in\cA$ and, for $k\le j<m$, $\cA_j'$ denotes the set of all $Y[j]$ where $Y\in\cA'$. 

\begin{rem}\label{Y is rel injective}
By Remark \ref{X is rel projective} any $X\in\cA$ is relatively projective in $T^\perp$ and, by the dual argument, any $Y\in\cA'$ is relatively injective in $^\perp T$.
\end{rem}

\begin{lem}\label{lem: where is Aj}
$\cA_j \subseteq \cE^m_j(T^\perp)$ for all $k< j\le m$ and $\cA_j'\subseteq\cE_j^{T[k]}$ for all $k\le j<m$. 
\end{lem}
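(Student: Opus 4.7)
The plan is to reduce each inclusion to a direct verification of the defining conditions, and then to cite already-recorded facts---namely Remark~\ref{X is rel projective} and the short exact sequence that defines $\alpha$.

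For the first inclusion $\cA_j\subseteq \cE^m_j(T^\perp)$ with $k<j\le m$, I would observe that any $X\in\cA$ is by definition an exceptional object of $T^\perp$, so $X[j]$ automatically lies in $\cE^m(T^\perp)$ whenever $j<m$. The only nontrivial case is $j=m$, where the definition of $\cE^m_m(T^\perp)$ requires $X$ to be a relatively projective object of $T^\perp$. This is exactly Remark~\ref{X is rel projective}: the monomorphism $X\into T^s$ induces a surjection $\Ext^1_\Lambda(T^s,Z)\onto \Ext^1_\Lambda(X,Z)$ (since $\Lambda$ is hereditary), and the left-hand group vanishes for $Z\in T^\perp$, forcing $X$ to be relatively projective.

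For the second inclusion $\cA_j'\subseteq \cE_j^{T[k]}$ with $k\le j<m$, the plan is to first check that $Y[j]\in\cE^m(\Lambda)$ and then to check compatibility with $T[k]$. Membership in $\cE^m(\Lambda)$ is free: $Y\in\cA'\subseteq\cE({}^\perp T)$ is exceptional, and since $j<m$ the fundamental domain~\eqref{eq: m-cluster cat, FD} imposes no projectivity condition on $Y$. For compatibility I would split into the cases recalled at the start of \S\ref{ss32}. When $j>k$ one needs $(T,Y)$ to be an exceptional pair, which is immediate from $Y\in{}^\perp T$. When $j=k$ one needs $T$ and $Y$ to be nonisomorphic and $\Ext$-orthogonal; the relations $Y\not\cong T$ and $\Ext^1_\Lambda(Y,T)=0$ again follow at once from $Y\in{}^\perp T$.

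The only remaining ingredient---and what I expect to be the (minor) main obstacle---is the vanishing $\Ext^1_\Lambda(T,Y)=0$ when $j=k$. I would obtain it by applying $\Hom_\Lambda(T,-)$ to the canonical short exact sequence $0\to X\to T^s\to Y\to 0$ defining $Y=\alpha(X)$: hereditarity of $\Lambda$ forces $\Ext^2_\Lambda(T,X)=0$, while rigidity of the exceptional object $T$ gives $\Ext^1_\Lambda(T,T^s)=0$, so in the resulting long exact sequence $\Ext^1_\Lambda(T,Y)$ is trapped between two zeros and vanishes. This completes the verification of both inclusions.
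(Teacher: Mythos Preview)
Your proof is correct and follows essentially the same route as the paper's: reduce to checking the defining conditions, invoke Remark~\ref{X is rel projective} for the $j=m$ case of the first inclusion, and verify ext-orthogonality for the $j=k$ case of the second. The only difference is cosmetic: the paper obtains $\Ext^1_\Lambda(T,Y)=0$ by noting that $\Hom_\Lambda(T,Y)\neq0$ together with $(T,Y)$ being an exceptional pair forces it, whereas you extract it directly from the long exact sequence of $0\to X\to T^s\to Y\to 0$; your argument is arguably more self-contained.
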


\begin{proof}
Since $(X,T)$ is exceptional, $X[j]\in \cE^m_j(T^\perp)$ for $j<m$. For $j=m$ we have, by Remark \ref{X is rel projective}, that $X[m]\in \cE^m_m(T^\perp)$. Thus $\cA_j \subseteq \cE^m_j(T^\perp)$ for all $k< j\le m$.

For $k<j<m$, the condition of $(T,Y)$ being exceptional is equivalent to $Y[j]\in \cE_j^{T[k]}=\cE_j(^\perp T)$. For $j=k$, the additional condition that $\Hom_\Lambda(T,Y)\neq0$ implies that $\Ext_\Lambda^1(T,Y)=0$. Therefore $Y,T$ are ext-orthogonal modules or, equivalently, $Y[k]\in \cE_k^{T[k]}$. So, for all $k\le j<m$, $\cA_j'\subseteq\cE_j^{T[k]}$. 
\end{proof}

The bijection $\alpha_j:\cA_j\to \cA_{j-1}'$ is given by sending $X[j]$ to $Y[j-1]$ where $Y=\alpha(X)$.


\subsection{The bijection $\beta_j:\cB_j\to \cB_{j}'$}\label{ss35}

We recall the notation from the outline: $\cB=\cE(T^\perp)-\cA$ and $\cB'=\cE(^\perp T)-\cA'$. The bijection $\gamma:\cE(T^\perp)\cong \cE(^\perp T)$ will be denoted with the letters $X,Y$. Thus $Y=\gamma(X)$. There are four cases, but one of them is $\cA\cong \cA'$.

\begin{rem}\label{chart for bijection b}
The bijection $\beta:\cB\cong \cB'$ falls into three cases.
\[
\begin{array}{cccc}
&X\in \cB &\leftrightarrow& Y\in\cB'\\
\hline\\
(a) & \text{ $X,T$ are hom-ext orthogonal } & X=Y & \text{ $T,Y$ are hom-ext orthogonal }\\
(b) & \Ext_\Lambda^1(X,T)\neq 0 & T^s\into Y\onto X & \exists g:T^s\into Y\\
(c) & \exists f:X\onto T^s & Y\into X\onto T^s & \Ext^1_\Lambda(T,Y)\neq0
\end{array}
\] 
\end{rem}

\begin{lem}\label{extension to k=m}
The bijection $\beta:\cB\to \cB'$, sending $X$ to $Y$, has the property that $Y$ is projective in $mod\text-\Lambda$ if and only if $X$ is a relatively projective object in $T^\perp$.
\end{lem}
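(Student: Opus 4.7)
The plan is to give a uniform argument covering all three cases (a), (b), (c) of Remark \ref{chart for bijection b} by combining two observations. First, applying $\Hom(-,W)$ to the defining short exact sequence of the case at hand (or to the identity $X=Y$ in case (a)) and noting that $\Hom(T^s,W)=0=\Ext^1(T^s,W)$ whenever $W\in T^\perp$, one obtains an isomorphism $\Ext^1(X,W)\cong\Ext^1(Y,W)$ for every $W\in T^\perp$. Second, in all three cases $Y\in{}^\perp T$: this is immediate in case (a), and in cases (b), (c) it follows from applying $\Hom(-,T)$ to the defining extension and invoking its universal property. The lemma will then follow once we reduce ``$\Ext^1(Y,Z)=0$ for all $Z\in mod\text-\Lambda$'' to ``$\Ext^1(Y,W)=0$ for all $W\in T^\perp$''.

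For the reduction, given $Z$, let $Z_T\subseteq Z$ denote the trace of $T$ in $Z$, i.e.\ the sum of images of all maps $T\to Z$, so that $Z_T$ is a quotient of some $T^b$. Set $Z_1=Z/Z_T$ and form the universal extension $0\to Z_1\to Z_2\to T^a\to 0$ with $a=\dim_{\End T}\Ext^1(T,Z_1)$. Since $\Lambda$ is hereditary and $T$ is rigid, $\Ext^1(T,T^b)=0$ forces $\Ext^1(T,Z_T)=0$; combined with the surjectivity of $\Hom(T,Z_T)\to\Hom(T,Z)$ (every map $T\to Z$ factors through its image, hence through $Z_T$), the long exact sequence of $0\to Z_T\to Z\to Z_1\to 0$ then gives $\Hom(T,Z_1)=0$. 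The universal property of the extension makes the connecting map $\Hom(T,T^a)\to\Ext^1(T,Z_1)$ an isomorphism, which forces $\Hom(T,Z_2)=\Ext^1(T,Z_2)=0$, so $Z_2\in T^\perp$. Finally, applying $\Hom(Y,-)$ to each of $0\to Z_T\to Z\to Z_1\to 0$ and $0\to Z_1\to Z_2\to T^a\to 0$ and using $Y\in{}^\perp T$ (which makes the $T^m$-terms vanish, and implies $\Ext^1(Y,Z_T)=0$), one obtains $\Ext^1(Y,Z)\cong\Ext^1(Y,Z_1)\cong\Ext^1(Y,Z_2)$, as required.

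Putting everything together: $Y$ is projective iff $\Ext^1(Y,W)=0$ for all $W\in T^\perp$ (by the reduction), iff $\Ext^1(X,W)=0$ for all $W\in T^\perp$ (by the first observation), iff $X$ is relatively projective in $T^\perp$. The main technical obstacle is the careful verification that $Z_2$ lands in $T^\perp$, which rests on hereditarity, rigidity of $T$, and the defining universal property of the extension; everything else is a routine long-exact-sequence manipulation. Note that the easy direction ``$Y$ projective $\Rightarrow$ $X$ relatively projective'' uses only the first observation, not the two-step reduction.
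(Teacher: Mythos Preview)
Your proof is correct and uses the same core ingredients as the paper: the isomorphism $\Ext^1(X,W)\cong\Ext^1(Y,W)$ for $W\in T^\perp$, the fact that $Y\in{}^\perp T$, and a reduction of an arbitrary test module $Z$ to one lying in $T^\perp$ via (i) killing the $T$-trace and (ii) taking a universal extension by copies of $T$.

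The organization differs, however. The paper argues the hard direction by contradiction: it picks a \emph{minimal} $Z$ with $\Ext^1(Y,Z)\neq0$, notes that $Z\notin T^\perp$, and then treats the two cases $\Hom(T,Z)\neq0$ and $\Ext^1(T,Z)\neq0$ separately, in each case producing a smaller counterexample or an element of $T^\perp$ to obtain a contradiction. Your version is a direct two-step construction $Z\rightsquigarrow Z_1\rightsquigarrow Z_2\in T^\perp$ with $\Ext^1(Y,Z)\cong\Ext^1(Y,Z_2)$, thereby avoiding the minimality argument altogether. This is arguably cleaner: taking the full trace $Z_T$ in one stroke replaces what in the paper is implicitly an iterated reduction, and then the universal extension finishes the job. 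One small remark: your justification that $Y\in{}^\perp T$ via ``applying $\Hom(-,T)$ and invoking the universal property'' is unnecessary, since $Y\in\cB'\subseteq\cE({}^\perp T)$ holds by definition.
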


\begin{proof}(from \cite{1-IT13})
In all three cases, we have an isomorphism
\[
	\Ext_\Lambda^1(X,Z)\cong \Ext_\Lambda^1(Y,Z)
\]
for all $Z\in T^\perp$. This is trivial when $X=Y$ and follows from the six term $\Hom\text-\Ext$ series in the other cases. If $Y$ is projective these are zero and thus $X$ is relatively projective in $T^\perp$.

Conversely, suppose that $X$ is relatively projective in $T^\perp$ and $Y$ is not projective. Then we will obtain a contradiction. Let $Z\in mod\text-\Lambda$ be minimal so that $\Ext^1_\Lambda(Y,Z)\neq0$. Since $\Ext_\Lambda^1(X,-)\cong \Ext_\Lambda^1(Y,-)$ on $T^\perp$ we have $Z\notin T^\perp$. Therefore, either
\begin{enumerate}
\item $\Hom_\Lambda(T,Z)\neq0$ or
\item $\Ext^1_\Lambda(T,Z)\neq0$ (and $\Hom_\Lambda(T,Z)=0$).
\end{enumerate}
In Case (1) there is a right exact sequence $T\to Z\to W\to 0$ where $W$ is smaller than $Z$. Since $Y\in \,^\perp T$, we have $\Ext_\Lambda^1(Y,Z)\cong \Ext_\Lambda^1(Y,W)\neq0$ contradicting the minimality of $Z$.

In Case (2), we consider the universal extension $Z\into E\onto T^s$ given by choosing a basis for $\Ext_\Lambda^1(T,Z)$ over the division algebra $\End_\Lambda(T)$. This gives the six term exact sequence
\[
	\Hom_\Lambda(T,Z)\to \Hom_\Lambda(T,E)\to \Hom_\Lambda(T,T^s)\xrightarrow\approx \Ext^1_\Lambda(T,Z)\to \Ext^1_\Lambda(T,E)\to \Ext^1_\Lambda(T,T^s)
\]
where the middle map is an isomorphism by construction and the two end terms are zero: $\Hom_\Lambda(T,Z)=0$ by assumption and $\Ext^1_\Lambda(T,T^s)=0$ since $T$ is exceptional. Therefore, the other two terms are zero and $E\in T^\perp$. But this implies $\Ext^1_\Lambda(Y,E)\cong \Ext^1_\Lambda(X,E)=0$. We have another exact sequence
\[
	\Hom_\Lambda(Y,T^s)\to \Ext^1_\Lambda(Y,Z)\to \Ext^1_\Lambda(Y,E)
\]
which gives $\Ext^1_\Lambda(Y,Z)=0$ since $Y\in\,^\perp T$. This contradiction completes the proof.
\end{proof}

We recall our notation. For all $k<j<m$, $\cB_j$ denotes the set of all $X[j]$ where $X\in\cB$ and $\cB_j'$ denotes the set of all $Y[j]$ where $Y\in\cB'$. Then $\cB_j\subseteq \cE_j$ and $\cB_j'\subseteq \cE_j'$ and we have a bijection $\beta_j:\cB_j\cong \cB_j'$ given by $\beta_j(X[j])=(\beta(X))[j]$. 

For $j=m>k$, $\cB_m=\cE_m-\cA_m$. This is the set of all $X[m]$ where $X\in\cB$ and $X$ is also relatively projective in $T^\perp$. We also have our notation: $\cB_m'=\cE_m'$. This is the set of all $Y[m]$ where $Y\in \cB'$ and $Y$ is also a projective module. By Lemma \ref{extension to k=m}, the bijection $\beta:\cB\cong \cB'$ gives a bijection $\beta_m:\cB_m\cong \cB_m'$ by $\beta_m(X[m])=(\beta(X))[m]$.

For $j=k\le m$, $\cB_k=\cE_k$ including objects $X[k]$ where $X\in\cA$. $\cB_k'=\cE'_k-\cA'_k$ is the set of all objects $Z[k]$ where $Z,T$ are ext-orthogonal and there does not exist an epimorphism $T^s\onto Z$ since such objects $Z$ lie in $\cA'$. Note that $\cA_m'$ is empty since there is no epimorphism $T^s\onto Z$ when $Z$ is projective.

\begin{lem}\label{lem: bijection bk}
There is a bijection $\beta_k:\cB_k\cong \cB_k'$ given by
\[
\beta_k(X[k])=\begin{cases} X[k] & \text{if }\Ext^1_\Lambda(X,T)=0\\
   (\beta(X))[k] & \text{otherwise}
    \end{cases}
\]
with inverse $\beta_k^{-1}$ given by
\[
\beta_k^{-1}(Y[k])=\begin{cases} Y[k] & \text{if }\Hom_\Lambda(T,Y)=0\\
   (\beta^{-1}(Y))[k] & \text{otherwise}
    \end{cases}
\]
\end{lem}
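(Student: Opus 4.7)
The plan is to establish Lemma~\ref{lem: bijection bk} by checking that both $\beta_k$ and the proposed $\beta_k^{-1}$ map into the correct target, and then verifying by case analysis that they are mutual inverses; the cases follow the partition of $\cE(T^\perp)$ into $\cA$ and cases (a), (b), (c) recorded in Remark~\ref{chart for bijection b}. For the forward direction, given $X[k]\in\cB_k$, I would split on whether $\Ext^1_\Lambda(X,T)$ vanishes. When it does, $X$ and $T$ are ext-orthogonal (the other ext is free from $X\in T^\perp$), $X\not\cong T$ (as $\Hom_\Lambda(T,X)=0$ while $\End_\Lambda(T)\neq 0$), and $X\notin\cA'$ (since every $Y\in\cA'$ receives an epi from $T^s$, incompatible with $\Hom_\Lambda(T,X)=0$), so $X[k]\in\cB_k'$. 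When it does not, $X$ lies in case (b), so $\beta(X)=Y$ fits in a universal extension $0\to T^s\to Y\to X\to 0$; the long exact sequence for $\Hom_\Lambda(T,-)$ together with $X\in T^\perp$ and $\Ext^1_\Lambda(T,T^s)=0$ delivers $\Ext^1_\Lambda(T,Y)=0$, while $\Hom_\Lambda(Y,T)=\Ext^1_\Lambda(Y,T)=0$ follows from $Y\in\,^\perp T$. Lastly $Y\notin\cA'$ because an epi $T^t\onto Y$ composed with $Y\onto X$ would give a nonzero map to $X\in T^\perp$.

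For the inverse, given $Z[k]\in\cB_k'$, the key step is to show either $\Hom_\Lambda(T,Z)=0$ or $\Hom_\Lambda(Z,T)=0$: this is the standard fact that an ext-orthogonal pair of non-isomorphic exceptional modules over a hereditary algebra always forms an exceptional pair in one of the two orderings, verified by observing that nonzero maps $f\colon T\to Z$ and $g\colon Z\to T$ give $gf\in\End_\Lambda(T)$, and any invertible such composition would force $T$ as a direct summand of the indecomposable $Z$ (so $T\cong Z$, contradicting $Z\not\cong T$), while the alternative of all compositions being zero violates the brick structure. With this partition in hand, if $\Hom_\Lambda(T,Z)=0$ then $Z\in T^\perp$ and the first branch returns $Z[k]\in\cB_k$; if $\Hom_\Lambda(T,Z)\neq 0$ then $\Hom_\Lambda(Z,T)=0$, so $Z\in\,^\perp T$, and combined with $Z\notin\cA'$ this places $Z$ in $\cB'$, so $\beta^{-1}(Z)[k]\in\cB_k$ is well-defined.

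Mutual inversion is then a direct case check: for $X[k]$ with $\Ext^1_\Lambda(X,T)=0$, $\beta_k$ returns $X[k]$ and $\beta_k^{-1}$ recognizes $\Hom_\Lambda(T,X)=0$, returning $X[k]$ again; for $X[k]$ with $\Ext^1_\Lambda(X,T)\neq 0$, the inclusion $T^s\into\beta(X)$ guarantees $\Hom_\Lambda(T,\beta(X))\neq 0$, so $\beta_k^{-1}$ invokes $\beta^{-1}$ and recovers $X[k]$; the opposite composition is symmetric. The main obstacle is the partition step in the inverse direction, which rests on the brick/exceptional structure of $T$ and $Z$; once it is in place, the remainder of the proof reduces to Remark~\ref{chart for bijection b} and routine six-term $\Hom$-$\Ext$ sequence computations.
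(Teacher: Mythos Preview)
Your case analysis for the generic situation $k<m$ is essentially the paper's argument, but you have missed the boundary case $k=m$, which the paper treats separately. When $k=m$, the object $T$ is projective, and by definition $\cB_m=\cE_m$ consists of $X[m]$ with $X$ \emph{relatively projective} in $T^\perp$, while $\cB_m'=\cE_m'$ consists of $Y[m]$ with $Y$ a \emph{projective $\Lambda$-module}. Your forward argument only verifies that the image is ext-orthogonal to $T$ and not in $\cA'$; it does not show that $X$ (in cases (a), (c), (d)) or $\beta(X)$ (in case (b)) is a projective $\Lambda$-module. Likewise your inverse argument does not check that the preimage is relatively projective in $T^\perp$. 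The paper handles this via Lemma~\ref{extension to k=m}, which says precisely that under $\beta$, relative projectivity in $T^\perp$ corresponds to projectivity in $mod\text-\Lambda$; it also notes that case (c) cannot occur when $T$ is projective, and that case (d) gives $X$ projective as a submodule of the projective $T^s$. Without invoking this lemma (or reproving it), your bijection is not established at level $m$.

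A smaller point: your justification that an ext-orthogonal pair of non-isomorphic exceptional modules must be hom-orthogonal in one direction is incomplete. The sentence ``the alternative of all compositions being zero violates the brick structure'' is not an argument --- having $gf=0$ for all such $f,g$ does not by itself contradict anything. The standard route is the Happel--Ringel lemma: since $\Ext^1_\Lambda(Z,T)=0$, every nonzero $T\to Z$ is mono or epi, and similarly for $Z\to T$; a short length comparison then forces an isomorphism. The paper simply asserts ``we know that $Y\in\,^\perp T$'' without proof, so this is a well-known fact being quoted, but your attempted justification should either be corrected or replaced by a citation.
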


\begin{proof}
When $\Hom_\Lambda(T,Y)\neq0$ we know that $Y\in \,^\perp T$. So, the right $T$ approximation of $Y$ is either a monomorphism $g:T^s\into Y$ or an epimorphism $T^s\onto Y$ and the second case is excluded since $Y\in\cA'$ in that case. Then $\beta^{-1}(Y)=X$ is the cokernel of $g$ and $\beta(X)=Y$. The following chart lists all possibilities.
\[
\begin{array}{cccc}
&X[k]\in \cB_k &\leftrightarrow& Y[k]\in\cB_k'\\
\hline\\
(a) & \text{ $X,T$ are hom-ext orthogonal } & X=Y & \text{ $T,Y$ are hom orthogonal }\\
(b) & \Ext_\Lambda^1(X,T)\neq 0 & T^s\into Y\onto X & \exists g:T^s\into Y\\
(c) & \exists f:X\onto T^s & X=Y& \exists f:Y\onto T^s\\
(d) & \exists h:X\into T^s & X=Y & \exists h:Y\into T^s
\end{array}
\] 

The special case $k=m$ needs further checking. In this case $T$ is projective and Case (c) will not occur. In Case (d), $X$ is projective since it is a submodule of a projective module. So, we can take $Y=X$. Cases (a) and (b) are the same as in Remark \ref{chart for bijection b}. So, Lemma \ref{extension to k=m} tells us that $X$ is relatively projective in $T^\perp$ if and only if $Y$ is a projective module. Therefore, the above formula gives a bijection $\cB_m\to \cB_m'$ when $k=m$.
\end{proof}

Combining all of these case, we obtain the desired bijection as summarized by the following.

\begin{prop}\label{prop summarizing bijection sigma-T[k]}
{
The bijection $
	\sigma=\sigma_{T[k]}:\cE^m(T^\perp)\cong \cE^{T[k]}
$ is given by
\begin{enumerate}
\item $\sigma(X[i])=X[i]$ if $X[i]\in \cE^{T[k]}$.
\item If $X[i]\notin \cE^{T[k]}$ then $\sigma(X[i])=Y[j]$ where $Y=\gamma(X)\in \cE(^\perp T)$ and $j=i$ or $i-1$ so that $(-1)^i\undim X$ and $(-1)^j\undim Y$ are congruent modulo $\undim T$.
\end{enumerate}
} 
\end{prop}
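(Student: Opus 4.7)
The plan is to verify the two clauses directly against the piecewise definition of $\sigma=\sigma_{T[k]}$ already assembled from $\alpha_j$, $\beta_j$, and $\beta_k$ in Sections~\ref{ss34}--\ref{ss35}. No new bijection has to be constructed; what remains is to recognize the fixed part and to check a dimension-vector congruence on the moving part.

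For clause~(1) I would match up, case by case, when $\sigma$ fixes $X[i]$ and when $X[i]\in\cE^{T[k]}$. For $i<k$, $\sigma$ is the identity by construction and these objects are automatically compatible with $T[k]$. For $i=k$, Lemma~\ref{lem: bijection bk} fixes $X[k]$ exactly when $\Ext^1_\Lambda(X,T)=0$; since $X\in T^\perp$ already gives $\Hom_\Lambda(T,X)=\Ext^1_\Lambda(T,X)=0$, this is precisely the level-$k$ compatibility condition. For $i>k$, compatibility of $X[i]$ with $T[k]$ forces $(T,X)$ to be an exceptional pair, which combined with $X\in T^\perp$ puts $X$ in case~(a) of Remark~\ref{chart for bijection b}, where $\beta(X)=X$ and $\sigma$ fixes $X[i]$. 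In every remaining sub-case --- namely $X\in\cA$ with $i>k$, $X$ in case~(b) of Remark~\ref{chart for bijection b} with $i\ge k$, or $X$ in case~(c) with $i>k$ --- either $\Hom_\Lambda(X,T)\neq 0$ or $\Ext^1_\Lambda(X,T)\neq 0$, which obstructs compatibility, and $\sigma$ genuinely moves the object via $\gamma$.

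For clause~(2) I would run the short exact sequences underlying $\gamma$ and read off dimension vectors modulo $\undim T$. In the $\alpha$-case ($X\in\cA$, $i>k$), the defining sequence $0\to X\to T^s\to Y\to 0$ gives $\undim Y\equiv -\undim X\pmod{\undim T}$; since $\sigma$ decreases the level by one, $j=i-1$ and the parity flips, so the desired congruence $(-1)^i\undim X\equiv(-1)^{i-1}\undim Y\pmod{\undim T}$ holds. In the $\cB$-cases (b) and (c), the sequences $0\to T^s\to Y\to X\to 0$ and $0\to Y\to X\to T^s\to 0$ each yield $\undim Y\equiv\undim X\pmod{\undim T}$, while $\sigma$ preserves the level; thus $j=i$ and the congruence reduces to $\undim X\equiv\undim Y\pmod{\undim T}$. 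The same calculation covers the non-fixing branch of $\beta_k$.

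The only real obstacle is bookkeeping: one must ensure every $X[i]\in\cE^m(T^\perp)$ is covered exactly once by the sub-cases and that the matching with Remark~\ref{chart for bijection b} and Lemmas~\ref{lem: where is Aj}, \ref{extension to k=m}, \ref{lem: bijection bk} is complete and non-overlapping. The underlying algebra is just the short exact sequences already exhibited.
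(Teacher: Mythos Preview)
Your proposal is correct and follows exactly the approach implicit in the paper, which presents this proposition as a summary of the piecewise construction (``Combining all of these cases, we obtain the desired bijection as summarized by the following'') rather than giving a separate proof. Your case analysis---matching the identity branches of $\sigma$ with membership in $\cE^{T[k]}$ via the compatibility conditions, and reading off the dimension-vector congruence from the three short exact sequences underlying $\alpha$ and cases~(b),~(c) of $\beta$---is precisely the bookkeeping the paper leaves to the reader.
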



\subsection{Compatibility}\label{ss36}

We will prove that $\sigma=\sigma_{T[k]}:\cE^m(T^\perp)\cong \cE^{T[k]}$ preserves compatibility:

\begin{prop}\label{sigma preserves compatibility}
$X[i],X'[i']$ are compatible objects of $\cE^m(T^\perp)$ if and only if $Y[j]=\sigma(X[i])$ and $Y'[j']=\sigma(X'[i'])$ are compatible in $\cE^{T[k]}$.
\end{prop}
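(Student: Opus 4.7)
The plan is a case analysis based on the positions of the levels $i, i'$ relative to $k$, leveraging the explicit description of $\sigma$ in Proposition \ref{prop summarizing bijection sigma-T[k]}: each module $Y$ underlying $\sigma(X[i])$ is either $X$ itself or $\gamma(X) \in \cE({}^\perp T)$, with a possible level shift $j = i-1$ in the second case. The key auxiliary result to establish first is a mutation lemma: if $X, X' \in \cE(T^\perp)$ and $(X, X')$ is an exceptional pair, then $(X, X', T)$ is an exceptional sequence in $mod\text-\Lambda$; iterated left mutations of $T$ across this sequence yield the exceptional sequences $(X, T, \gamma(X'))$ and $(T, \gamma(X), \gamma(X'))$, so that both $(X, \gamma(X'))$ and $(\gamma(X), \gamma(X'))$ are exceptional pairs in $mod\text-\Lambda$. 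The symmetric statement holds with $(X', X)$ in place of $(X, X')$, and combining both orderings shows that ext-orthogonality of $X, X'$ in $T^\perp$ transfers to ext-orthogonality of $\gamma(X), \gamma(X')$ in ${}^\perp T$.

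When $i \ne i'$, compatibility in $\cE^m(T^\perp)$ corresponds to an exceptional pair in some order, and one transfers this relation through $\gamma$: directly via the mutation lemma when both $Y, Y'$ are given by $\gamma$, and via a six-term $\Hom$-$\Ext$ computation---applied to the short exact sequences of Remark \ref{chart for bijection b}---in the mixed case where exactly one module is preserved. The crucial input in a mixed case is that $\Hom(T, X) = \Ext^1(T, X) = 0$ (from $X \in T^\perp$), and dually $\Hom(X, T) = \Ext^1(X, T) = 0$ when $Y = X$ with $i > k$ (which forces $X \in {}^\perp T \cap T^\perp$); these vanish the $T^s$-terms in the six-term sequence, as in the proof of Lemma \ref{extension to k=m}. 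The same-level case $i = i'$ requires transferring ext-orthogonality; this is analogous when $j = j'$, and when the levels shift apart (only possible when $i > k$ and exactly one of $X, X'$ lies in $\cA$), the six-term sequence upgrades ext-orthogonality of $X, X'$ to the required exceptional-pair condition on $Y, Y'$ by exploiting the $\Ext^1$-vanishing against $T$ provided by the relevant perpendicular category.

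The reverse implication is obtained by running the analogous arguments for $\sigma^{-1}$, which has the same structure with $\gamma^{-1}$ in place of $\gamma$ and the dual short exact sequences. The main obstacle is the bookkeeping of sub-cases---roughly nine arising from the positions of $i, i'$ relative to $k$, each splitting further according to which of $X, X'$ are preserved by $\sigma$---but each sub-case reduces to essentially the same short $\Hom$-$\Ext$ calculation. A useful sanity check throughout is that $Y \ne Y'$ in the target whenever $X[i] \ne X'[i']$ in the source, which follows from the fact that $Y$ and $Y'$ cannot simultaneously lie in both $T^\perp$ and ${}^\perp T$ unless they are already hom-ext orthogonal to $T$, in which case $Y = X$ and $Y' = X'$.
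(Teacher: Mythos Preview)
Your overall architecture matches the paper's: case analysis on the relative positions of the levels, with the ``mutation lemma'' (which is exactly Lemma~\ref{three exceptional pairs}) as the engine for the unequal-level cases. The six-term computations you sketch for the mixed cases correspond to the paper's Cases 2 and 3.

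There is, however, a genuine gap in your treatment of the equal-level case $i=i'$. You write that ``combining both orderings'' of the mutation lemma transfers ext-orthogonality of $X,X'$ to ext-orthogonality of $\gamma(X),\gamma(X')$. But ext-orthogonality does \emph{not} mean that both $(X,X')$ and $(X',X)$ are exceptional pairs: it only says $\Ext^1$ vanishes in both directions, while $\Hom_\Lambda(X,X')$ may be nonzero. From ext-orthogonality you therefore only know that \emph{one} ordering, say $(X,X')$, is exceptional; the mutation lemma then yields that $(\gamma(X),\gamma(X'))$ is exceptional, i.e.\ $\Ext^1(\gamma(X'),\gamma(X))=0$, but says nothing about $\Ext^1(\gamma(X),\gamma(X'))$. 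In fact $\gamma$ does not preserve ext-orthogonality in general: if $X\in\cA$, $X'\in\cB$, $\Hom_\Lambda(X,X')\neq0$ and $\Ext^1_\Lambda(X,X')=0$, then $\Ext^1_\Lambda(\gamma(X),\gamma(X'))\neq0$.

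The paper handles this via Lemma~\ref{linear relation}, a dimension-vector argument showing that $\Ext^1_\Lambda(X,X')\neq0\Leftrightarrow a>0$ and $\Ext^1_\Lambda(\gamma(X),\gamma(X'))\neq0\Leftrightarrow \varepsilon a>0$, where $\varepsilon=+1$ exactly when $X,X'$ both lie in $\cA$ or both in $\cB$. The crucial observation is that $i=i'$ together with $j=j'$ forces the level shifts to agree, hence both $X,X'$ lie in $\cA$ or both in $\cB$, so $\varepsilon=1$ and ext-orthogonality does transfer. Your six-term approach can be made to work here too (the $\cA\times\cA$ case is automatic since elements of $\cA$ are relatively projective in $T^\perp$ and elements of $\cA'$ relatively injective in ${}^\perp T$; the $\cB\times\cB$ case splits into the sub-cases (a), (b), (c) of Remark~\ref{chart for bijection b} and each requires its own short computation), but it does not follow from the mutation lemma alone, and it is this missing step that your proposal needs to supply.
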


We need the following lemmas.

{
\begin{lem}\label{three exceptional pairs}
Let $X,X'\in\cE(T^\perp)$ and consider the pairs:
\[
	(X,X'),\quad (X,\gamma(X')),\quad (\gamma(X),\gamma(X'))
\]
If any of these is an exceptional pair, the other two are exceptional pairs.
\end{lem}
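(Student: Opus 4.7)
The plan is to reduce all three equivalences to the existence of a single exceptional triple, varied by mutation. Set $Y=\gamma(X)$ and $Y'=\gamma(X')$. Since $X,X'\in T^\perp$, the pairs $(X,T)$ and $(X',T)$ are automatically exceptional, and by the defining property of $\gamma$ the pairs $(T,Y)$ and $(T,Y')$ are also exceptional.

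The key observation is that each of the three pair conditions in the statement is equivalent to a specific length-$3$ exceptional sequence being valid. Unpacking the definition of an exceptional triple $(E_1,E_2,E_3)$ in terms of its three constituent exceptional pairs $(E_i,E_j)$ with $i<j$: first, $(X,X',T)$ is an exceptional triple iff $(X,X')$ is an exceptional pair, because the other two pair conditions reduce to $X,X'\in T^\perp$ which already hold; second, $(X,T,Y')$ is an exceptional triple iff the ``skipped'' pair $(X,Y')=(X,\gamma(X'))$ is exceptional, because $(X,T)$ and $(T,Y')$ are already known to be exceptional; third, $(T,Y,Y')$ is an exceptional triple iff $(Y,Y')=(\gamma(X),\gamma(X'))$ is exceptional, because $Y,Y'\in {}^\perp T$ supplies the other two pair conditions.

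I would then invoke the standard theory of mutation of exceptional sequences \cite{6-CB,7-Ringel}: right-mutating the last two entries of $(X,X',T)$ replaces $(X',T)$ by $(T,\gamma(X'))=(T,Y')$, producing $(X,T,Y')$; and right-mutating the first two entries of $(X,T,Y')$ replaces $(X,T)$ by $(T,\gamma(X))=(T,Y)$, producing $(T,Y,Y')$. Since mutation is invertible and sends exceptional triples to exceptional triples, the three displayed triples are exceptional simultaneously, and therefore the three pair conditions in the lemma are equivalent.

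The main obstacle is verifying that the mutation operation on a pair containing $T$ coincides with the bijection $\gamma$ used in the paper. This follows from the characterization $\langle X,T\rangle = \langle T,\gamma(X)\rangle$ of $\gamma(X)$ as the unique exceptional module in ${}^\perp T$ spanning the same rank-$2$ wide subcategory as $(X,T)$, which is precisely what the standard mutation of the pair $(X,T)$ does. Once this identification is in hand, no further calculation is needed: the equivalence of the three pairs is a direct consequence of the two mutation steps above.
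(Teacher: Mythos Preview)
Your proposal is correct and follows essentially the same approach as the paper: both reduce the three pair conditions to the corresponding triples $(X,X',T)$, $(X,T,\gamma(X'))$, $(T,\gamma(X),\gamma(X'))$ and observe that these are related by mutation of exceptional sequences. You simply spell out in more detail why each pair condition is equivalent to the associated triple being exceptional and why the relevant mutation step coincides with $\gamma$, which the paper leaves implicit.
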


\begin{proof}
A pair is exceptional if the corresponding triple in the following sequence is exceptional:
\[
	(X,X',T),\quad (X,T,\gamma(X')),\quad (T,\gamma(X),\gamma(X')).
\]
But any one of these gives the other two by mutation of exceptional sequences.
\end{proof}

\begin{lem}\label{linear relation}
Let $(X,X')$ be an exceptional pair in $\cE(T^\perp)=\cA\cup \cB$ and let $Y=\gamma(X),Y'=\gamma(X')\in \cE(^\perp T)=\cA'\cup \cB'$. Let $X''\in\cE(T^\perp)$, $Y''\in\cE(^\perp T)$ be given by mutation of exceptional sequences: 
\[
	(X,X',T)\sim (X',X'',T)\sim (X',T,Y'')\sim (T,Y',Y'')\sim (T,Y,Y')
\]
Then there are unique rational numbers $a,b$ and signs $\vare,\vare'=\pm1$ so that
\[
	\undim X+a\undim X'+b\undim X''=0
\]
\[
	\undim Y+\vare a\undim Y'+\vare'b\undim Y''=0
\]
Furthermore
\begin{enumerate}
	\item $\vare=1$ if and only if $X,X'$ both lie in $\cA$ or they both lie in $\cB$.
	\item $\Ext^1_\Lambda(X,X')\neq0$ if and only if $a>0$
	\item $\Ext^1_\Lambda(Y,Y')\neq0$ if and only if $\vare a>0$
\end{enumerate}
\end{lem}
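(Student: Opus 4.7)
The plan is to derive both linear relations from the observation that $X,X',X''$ (respectively $Y,Y',Y''$) span a rank-two wide subcategory, and then to match coefficients using the short exact sequences defining $\gamma$.

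\emph{Step 1 (existence and uniqueness).} Because $X''$ is obtained from $(X,X')$ by mutation, the three objects lie in the common rank-two wide subcategory $\langle X,X'\rangle=\langle X',X''\rangle\subseteq T^\perp$, so their dimension vectors lie in a rank-two sublattice of $K_0(mod\text-\Lambda)$. Since $\undim X$ and $\undim X'$ are linearly independent (distinct bricks in a rank-two wide subcategory), there exist unique rationals $a,b$ with $\undim X+a\undim X'+b\undim X''=0$. An identical argument in $\langle Y,Y'\rangle=\langle Y,Y''\rangle\subseteq {}^\perp T$ yields unique rationals $a',b'$ with $\undim Y+a'\undim Y'+b'\undim Y''=0$.

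\emph{Step 2 (identifying the signs and proving (1)).} The four cases describing $\gamma$ (case $\cA$ of Section \ref{ss34} together with cases (a), (b), (c) of Remark \ref{chart for bijection b}) show that $\undim\gamma(X)=\sigma_X\undim X+c_X\undim T$ for some integer $c_X$, with $\sigma_X=-1$ when $X\in\cA$ and $\sigma_X=+1$ when $X\in\cB$; the analogous formulas hold for $Y'=\gamma(X')$ and $Y''=\gamma(X'')$. Since $(X,X',T)$ is a complete exceptional sequence, $\{\undim X,\undim X',\undim T\}$ is a $\QQ$-basis of $K_0(\langle X,X',T\rangle)$. Substituting the expressions for $\undim Y,\undim Y',\undim Y''$ into $\undim Y+a'\undim Y'+b'\undim Y''=0$, eliminating $\undim X''$ via $b\undim X''=-\undim X-a\undim X'$ from Step 1, and comparing coefficients on this basis forces $a'=\sigma_X\sigma_{X'}\,a$ and $b'=\sigma_X\sigma_{X''}\,b$. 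Setting $\vare:=\sigma_X\sigma_{X'}$ and $\vare':=\sigma_X\sigma_{X''}$ yields the stated second relation with $\vare,\vare'\in\{\pm1\}$. Part (1) is then immediate: $\vare=+1$ iff $\sigma_X=\sigma_{X'}$, iff $X$ and $X'$ both lie in $\cA$ or both in $\cB$.

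\emph{Step 3 (parts (2) and (3)).} A direct case analysis of the mutation $(X,X')\to(X',X'')$ inside $\langle X,X'\rangle$ gives (2): the universal extension in the case $\Ext^1(X,X')\neq 0$ yields $\undim X''=\undim X'+r\undim X$ for $r=\dim\Ext^1(X,X')>0$, so $a=1/r>0$; hom-ext orthogonality of $X,X'$ gives $X''=X$ and $a=0$; and $\Hom(X,X')\neq 0=\Ext^1(X,X')$ gives $a=-1/r<0$ from the relevant mutation triangle. Hence $a>0\iff\Ext^1(X,X')\neq 0$. Part (3) is (2) applied to the exceptional pair $(Y,Y')$ inside $\langle Y,Y'\rangle$, since by Step 2 the coefficient of $\undim Y'$ in the $Y$-triple relation is precisely $\vare a$.

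The main obstacle is the bookkeeping in Step 2: one must uniformly verify $\undim\gamma(X)\equiv\sigma_X\undim X\pmod{\undim T}$ with the predicted sign across all four cases of $\gamma$, and then carry the signs correctly through the linear-algebra substitution. Once Step 2 is secured, Step 3 reduces to classical rank-two mutation arithmetic.
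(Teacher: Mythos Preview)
Your proof is correct and takes essentially the same approach as the paper: both establish the two linear relations via rank-two linear algebra, link them through the congruence $\undim\gamma(Z)\equiv\sigma_Z\,\undim Z\pmod{\undim T}$ (with $\sigma_Z=-1$ for $Z\in\cA$ and $\sigma_Z=+1$ for $Z\in\cB$), and then read off (1)--(3) by the standard case analysis of rank-two mutation. One minor slip in Step~3: the universal extension in the case $\Ext^1(X,X')\neq0$ is $0\to X'^s\to X''\to X\to 0$, so $\undim X''=s\,\undim X'+\undim X$ and hence $a=s$ (and $a=-s$ in the $\Hom\neq0$ case), not $a=\pm1/r$; this does not affect your sign conclusions, which are all the lemma needs.
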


\begin{proof}
The existence and uniqueness of $a,b\in\QQ$ follows from the fact that $\undim X',\undim X''$ are linearly independent vectors in $\QQ^n$ and $\undim X$ is in their span.

The equation for $\gamma$ gives us that $\undim Y$ is congruent to $\delta \undim X$ modulo $\undim T$ where $\delta=-1$ for $X\in\cA$ and $\delta=1$ for $X\in\cB$. This gives:
\[
	\delta \undim Y+\delta' a\undim Y'+\delta'' b\undim Y''=c\undim T
\]
for some $c\in\QQ$. However, $\undim Y',\undim Y'',\undim T$ are linearly independent since $(Y',Y'',T)$ form an exceptional sequence. And $\undim Y,\undim Y',\undim Y''$ are linearly dependent. So, $c=0$.

Statement (1) is now clear: $\delta,\delta'$ are both negative when $X,X'\in\cA$ and they are both positive if $X,X'\in\cB$. So, $\vare=\delta'/\delta=1$ in these cases and $\vare=-1$ in the other cases.

If $\Ext^1_\Lambda(X,X')\neq0$ then $a=1>0$. Otherwise, either $\Hom_\Lambda(X,X')\neq0$, in which case $a<0$ or both are zero in which case $a=0$. This proves (2) and (3) is similar.
\end{proof}
}

\begin{proof}[Proof of Proposition \ref{sigma preserves compatibility}]
{ 
Throughout this proof we will use the notation $Y[j]=\sigma(X[i])$, $Y'[j']=\sigma(X'[i'])$. By reversing the order if necessary we may assume that $i\le i'$ and $j\le j'$. There are four cases.

\vs2
\noi\underline{Case 1}: $i<i'$, $j<j'$.
\vs2

When $i<i'$, $X[i],X'[i']$ are compatible if and only if $(X,X')$ is an exceptional sequence. By the formula for $\sigma$, $(Y,Y')$ is equal to either $(X,X')$, $(X,\gamma(X'))$ or $(\gamma(X),\gamma(X'))$. Whichever it is, by Lemma \ref{three exceptional pairs}, $(Y,Y')$ is exceptional if and only if $(X,X')$ is exceptional. So, compatibility of $X[i],X'[i']$ is equivalent to compatibility of $Y[j],Y'[j']$.

\vs2
\noi\underline{Case 2}: $i<i'$, $j=j'$.
\vs2

In this case $i'=j+1$, $X[i]\in\cB_i$, $X'[i']\in \cA_{j+1}$, $Y'[j']\in \cA_{j}'$, there is a short exact sequence 
\[X'\into T^s\onto Y'\]
giving the correspondence $Y'=\alpha(X')$, and $Y$ is equal to $X$ or $\gamma(X)$. The case $Y=X\neq \gamma(X)$ occurs only when $i=k$, $\Hom_\Lambda(X,T)\neq0$ and $\gamma(X)$ is either the kernel or cokernel of the universal map $X\to T^s$.

Suppose that $X[i], X'[i']$ are compatible. Then $(X,X')$ is an exceptional pair. So, $(Y,Y')$ which is either $(X,Y')$ or $(\gamma(X),Y')$ is an exceptional pair by Lemma \ref{three exceptional pairs}. Then
\begin{enumerate}
\item $\Ext^1_\Lambda(\gamma(X),Y')=0$ since $Y'$ is a relatively injective object of $^\perp T$.
\item $\Ext^1_\Lambda(T,Y')=0$ since $\Hom_\Lambda(T,Y')\neq0$ and $(T,Y')$ is an exceptional pair.
\end{enumerate}
But $X,\gamma(X)$ and $T^s$ form an exact sequence in which $X$ is not the last item. Therefore, by right exactness of $\Ext_\Lambda^1(-,Y')$, we conclude that $\Hom_\Lambda(X,Y')=0$. Therefore, $Y,Y'$ are ext-orthogonal regardless of whether $Y=X$ or $Y=\gamma(X)$. So, $Y[j],Y'[j]$ are compatible.

Conversely, suppose that $Y[j],Y'[j]$ are compatible. Then we claim that $X[i],X'[i']$ are compatible, or equivalently $(X,X')$ is an exceptional pair. Since $X'$ is relatively projective in $T^\perp$, we have $\Ext^1_\Lambda(X',X)=0$. So, it suffices to show $\Hom_\Lambda(X',X)=0$. But $X\in T^\perp$. So,
\[
	\Hom_\Lambda(X',X)\cong \Ext^1_\Lambda(Y',X)\cong \Ext^1_\Lambda(Y',Y)=0
\]
where the second isomorphism follows from the fact that $Y'\in\,^\perp T$ and $X,Y,T^s$ form an exact sequence in which $X,Y$ are adjacent terms. (See Remark \ref{chart for bijection b}.)

\vs2
\noi\underline{Case 3}: $i=i'$, $j<j'$.
\vs2

Then $j=i-1$, we have an exact sequence $X\into T^s\onto Y$ giving the bijection $\alpha(X)=Y$ and $Y'=\beta(X')\in\,^\perp T$. If $Y[i-1], Y'[i]$ are compatible, then $(Y,Y')$ form an exceptional pair and thus $(X,X')$ also forms an exceptional pair. This implies that $X,X'$ are ext-orthogonal since $X$ is relatively projective in $T^\perp$. Thus $X[i],X'[i]$ are compatible.

Conversely suppose that $X[i],X'[i]$ are compatible. Since $Y'\in\,^\perp T$ we have, as in Case 2,
\[
	\Hom_\Lambda(Y',Y)\cong \Ext^1_\Lambda(Y',X)\cong \Ext^1_\Lambda(X',X)=0
\]
and $\Ext_\Lambda^1(Y',Y)=0$. Therefore $Y[i-1],Y'[i]$ are compatible.

\vs2
\noi\underline{Case 4}: $i=i'$, $j=j'$.
\vs2

In this case we need to prove that $X,X'$ are ext-orthogonal if and only if $Y,Y'$ are ext-orthogonal. The case $i<k$ being trivial, we assume that $i=i'\ge k$. 

Suppose that $i>k$. Then $Y=\gamma(X), Y'=\gamma(X')$. If $X,X'$ are ext-orthogonal they form an exceptional pair in some order, say $(X,X')$, and so does $(Y,Y')$. In the linear relation in Lemma \ref{linear relation} we are in the case when $\vare=1$ since $X,X$ are either both in $\cA$ or both in $\cB$. Therefore, $\Ext^1_\Lambda(X,X')=0$ if and only if $\Ext^1_\Lambda(Y,Y')=0$. So, $Y,Y'$ are ext-orthogonal. The converse works in the same way.

Finally, suppose that $i=k$. Then we also have $j=k$. The bijection $\beta_k:\cB_k\cong \cB_k'$ is given by $\beta$ and the argument in the previous paragraph applies except in Cases (c),(d) in the chart in the proof of Lemma \ref{lem: bijection bk}. Also, $\beta_k$ is the identity except in Case (b). So, the only case we need to check is when one of the objects, say $X[k]$ is in Case (b) and the other, $X'[k]$ is in Case (c) or (d). In other words, $Y=\beta(X)$ is given by the exact sequence
\[
	T^s\into Y\onto X
\]
and $Y'=X'\in T^\perp$ has the property that $\Ext^1_\Lambda(X',T)=0$ and $\Hom_\Lambda(X',T)\neq0$. The exact sequence tells us that
\[
	\Ext^1_\Lambda(X,X')\cong \Ext^1_\Lambda(Y,X')= \Ext^1_\Lambda(Y,Y').
\]
Also, $\Ext^1_\Lambda(Y',T)=0$ implies that
\[
	\Ext^1_\Lambda(Y',Y)\cong \Ext^1_\Lambda(Y',X)= \Ext^1_\Lambda(X',X).
\]
Therefore, $X[k],X'[k]$ are compatible if and only if $Y[k],Y'[k]$ are compatible. 
}
\end{proof}


\section{More properties of $m$-exceptional sequences}\label{sec4}

We show how $m$-exceptional sequences are related to the bijection between the set of (isomorphism classes of) $m$-cluster tilting objects in the $m$-cluster category of $mod\text-\Lambda$ and (isomorphism classes of) $m$-$\Hom_{\le0}$-configurations from \cite{9-BRT} which in the sequel we refer to as ``$m$-configurations''. We use this to derive the expected tropical duality formula (Corollary \ref{cor: duality between m-clusters and c-vectors}) relating the dimension vectors of these objects. Then we obtain a convenient reformulation of the mutation formula for $\tilde c$-vectors (Theorem \ref{thm: mutation given by tilde c vectors}) using a sign convention which matches $c$-vectors and dimension vectors of components of $m$-configurations. Finally, we give an example of this mutation formula. These formulas were first obtained in type $A_n$ in \cite{10-mtrees}

\subsection{Relation to Buan-Reiten-Thomas \cite{9-BRT}}\label{ss41}

First, the definitions. As always, $\Lambda$ is a finite dimensional hereditary algebra over a field $K$ with $n$ simple objects $S_i$.

\begin{defn}\label{def: mHom-configuration}\cite{9-BRT}
An \emph{$m$-configuration} in the bounded derived category $\cD^b(mod\text-\Lambda)$ is defined to be an object $M$ of this derived category with $n$ (nonisomorphic) components $M_i[k_i]$ satisfying the following conditions.
\begin{enumerate}
\item $0\le k_i\le m$
\item $\Hom_{\cD^b}(M_i[k_i],M_j[k_j-s])=0$ for all $i\neq j$ and $s\ge0$.
\item $M_1,\cdots,M_n$ form a complete exceptional sequence in some order.
\end{enumerate}
\end{defn}

In \cite{9-BRT} a bijection is constructed between the set of (isomorphism classes of) $m$-configurations and $m$-cluster tilting objects for any hereditary algebra $\Lambda$. This statement and proof will be reexamined in Theorem \ref{thm of BRT} below.

This bijection has been extended to a bijection between \emph{simple minded collections} and \emph{two term silting objects} in the category of bounded projective complexes \cite{11-IY}, \cite{12-KY}. However, the relation with exceptional sequences is special to the hereditary case.

We need the following definitions and sign conventions to make Definition \ref{def: mHom-configuration}, when expressed in terms of the dimension vectors of $M_j[k_j]$, agree (when $m=1$) with the characterization of $c$-vectors given in \cite{13-ST}.

\begin{defn}\label{def: m-slope vector}
The \emph{$(m)$-slope vector} of $M[k]$ is defined by $slope(M[k]):=\undim Mt^{m-k}\in\ZZ[t]^n$. We define the \emph{dimension vector} of $M[k]$ to be $\undim(M[k]):=(-1)^{m-k}\undim M$. This is the slope vector evaluated at $t=-1$. We define the \emph{$\tilde c$-vectors} of the $m$-cluster $T$ to be the slope vectors of the components $X_j[\ell_j]$ of the corresponding $m$-configuration:
\[
	\tilde c_j=slope(X_j[\ell_j])=\undim X_j\, t^{m-\ell_j}\in \ZZ[t]^n
\]
The \emph{$c$-vectors} of the $m$-cluster tilting object $T$ are $c_j=\tilde c_j|_{t=-1}=(-1)^{m-\ell_j}\undim X_j\in \ZZ^n$. We also use round brackets to indicate slope: 
\[
X(s):=X[m-s].\]
\end{defn}

The following is a reformulation of the main theorem of \cite{9-BRT} in terms of $m$-exceptional sequences:

\begin{thm}\cite{9-BRT}\label{thm of BRT}
Let $T=(T_1[k_1],T_2[k_2],\cdots,T_n[k_n])$ be an ordered $m$-cluster so that $(T_n,T_{n-1},\cdots,T_1)$ is a complete exceptional sequence. Then the $m$-exceptional sequence  corresponding to $T$ is equal to the set of components of the $m$-configuration corresponding to the $m$-cluster $T$.
\end{thm}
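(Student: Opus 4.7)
My plan is to show that the $m$-exceptional sequence $(E_1,\ldots,E_n) := \theta_n(T_1,\ldots,T_n)$ produced by Theorem \ref{main thm}, viewed as an unordered collection in $\cD^b(mod\text-\Lambda)$, is an $m$-configuration, and then to identify it with the specific $m$-configuration associated to $T$ by the BRT bijection. Writing $E_j = M_j[d_j]$, conditions (1) and (3) of Definition \ref{def: mHom-configuration} are built into the definition of an $m$-exceptional sequence, so the substantive task is verifying condition (2):
\[
\Hom_{\cD^b}(E_i, E_j[-s]) = 0 \quad \text{for } i \ne j \text{ and } s \ge 0.
\]

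Since $\Lambda$ is hereditary, (2) reduces to finitely many $\Hom$ and $\Ext^1$ conditions for each pair $(i,j)$. For $i > j$, the exceptional sequence condition on $(M_1,\ldots,M_n)$ supplies all of them automatically. For $i < j$, the remaining ``forward'' vanishings $\Hom(M_i,M_j) = 0$ when $d_i \le d_j$ and $\Ext^1(M_i,M_j) = 0$ when $d_i < d_j$ do not follow from the exceptional sequence data alone and must be extracted from the $m$-exceptional-sequence structure; this is the technical heart of the argument.

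I would proceed by induction on $n$, with trivial base case. Set $T[k] := T_n[k_n]$. By Proposition \ref{main thm, part 2} and the recursive construction of $\theta_n$, we have $E_n = T[k]$ and $(E_1,\ldots,E_{n-1}) = \theta_{n-1}\bigl(\sigma_{T[k]}^{-1}(T_1,\ldots,T_{n-1})\bigr)$, which is an $m$-exceptional sequence in $T^\perp$. In particular every $M_j$ with $j < n$ lies in $T^\perp$, giving for free condition (2) on all pairs $(n, j)$ in the direction with $T$ on the left $\Hom$-entry. The induction hypothesis, applied inside $T^\perp$, handles pairs $(E_i, E_j)$ with $i,j < n$. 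What remains is $\Hom_{\cD^b}(E_j, E_n[-s]) = 0$ for $j < n$, i.e. the forward $\Hom$/$\Ext^1$ vanishing from $M_j$ into $T$.

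This final step is the main obstacle. I would resolve it by matching the three cases of cluster compatibility of $T_j[k_j]$ with $T[k]$ (governed by whether $k_j < k$, $k_j = k$, or $k_j > k$) against the four cases describing $\sigma_{T[k]}^{-1}$ in Remark \ref{chart for bijection b}, checking in each case that the shift $d_j$ assigned by $\theta_n$ to the mutated underlying module $M_j$ forces the required forward vanishing into $T$. Having established that $\{E_1,\ldots,E_n\}$ is an $m$-configuration, I would then identify it with the $m$-configuration of $T$ under BRT by induction, using that their construction admits the same kind of recursive decomposition along the distinguished summand $T[k]$ and its perpendicular category $T^\perp$, so the inductive step closes on both sides simultaneously.
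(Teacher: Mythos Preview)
Your final identification step (part (b)) is essentially what the paper does, but the paper skips your part (a) entirely. The paper observes that the BRT bijection is, by construction in \cite{9-BRT}, the Garside braid move on $(T_n,\ldots,T_1)$: slide each $T_j$ rightward past $T_n$ using the mutation $(T_n,T_j)\sim(X_j,T_n)$, then recurse in $T_n^\perp$. By Proposition~\ref{prop summarizing bijection sigma-T[k]}, $\sigma_{T[k]}^{-1}$ is exactly this mutation on underlying modules (with a determined shift rule), so the recursive definition of $\theta_n$ \emph{is} the Garside braid move. Hence $\theta_n(T)$ equals the BRT $m$-configuration on the nose, and the $m$-configuration property comes for free from \cite{9-BRT}. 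The paper also checks explicitly that $(X_{n-1},\ldots,X_1)$ remains an exceptional sequence of ext-orthogonal objects in $T^\perp$, so the induction hypothesis is inherited; you gloss over this.

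Your part (a) has a genuine gap in the ``final step.'' The case analysis of $\sigma_{T[k]}^{-1}(T_j[k_j])$ produces $X_j[\ell_j]$, but the object $E_j=M_j[d_j]$ whose forward vanishing into $T$ you must verify is obtained from $X_j[\ell_j]$ by the \emph{full} recursion $\theta_{n-1}$ inside $T^\perp$. In general $M_j\neq X_j$ and $d_j\neq \ell_j$ (already for $n=3$, $E_1$ is a further mutation of $X_1$ past $X_2$). So knowing which of the cases in Remark~\ref{chart for bijection b} governs $T_j\mapsto X_j$ does not directly constrain $\Hom_\Lambda(M_j,T)$, $\Ext^1_\Lambda(M_j,T)$, or the relation between $d_j$ and $k$. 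To close this you would need to track how shifts and the $\cA/\cB$ dichotomy evolve through every layer of the recursion---which amounts to re-deriving Lemma~\ref{lem: slope of th(T) is = or 1 less than slope of T} and, ultimately, the Garside description. Since your part (b) already delivers the theorem once you recognize BRT's map as the braid move, part (a) is both harder than advertised and unnecessary.
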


\begin{rem}\label{rem: after Thm of BRT}
Note that there may be more than one way to arrange the objects $T_j[k_j]$ so that $(T_n,\cdots,T_1)$ is a complete exceptional sequence. However, any two arrangements will differ by a sequence of transpositions of consecutive hom-ext-orthogonal objects. The statement of the theorem includes the statement that, if two consecutive $T_i$ commute in this way, the corresponding components $E_i$ of the $m$-configuration also commute (are hom-ext orthogonal). For example, one could arrange the terms $T_i[k_i]$ so that $k_1\ge k_2\ge\cdots\ge k_n$. However, assuming this would needlessly complicate the proof.
\end{rem}

We need to go through the proof (the same as in \cite{9-BRT}) since this proof leads us to the ``tropical duality formula'' Corollary \ref{cor: duality between m-clusters and c-vectors} below.

\begin{proof}
We follow the method in \cite{9-BRT} which also appears in \cite[Prop 3.3]{Br} and \cite{Bo}. Namely, we apply to the exceptional sequence $(T_n,T_{n-1},\cdots,T_1)$ the Garside braid move given by taking the objects on the left one at a time and moving them over the objects on the right which haven't moved yet. The result is $(E_1,\cdots,E_n)$ where, by construction, $E_n=T_n$. The slopes are determined by a simple formula. The statement of the theorem is that this is a special case of the bijection between $m$-cluster tilting objects and $m$-exceptional sequences.

We recall that the correspondence $\theta_n:T\mapsto E$ is given by induction on $n$ by
\[
	\theta_n\left(T_1[k_1],\cdots,T_n[k_n]\right)
	= \left(\theta_{n-1}(X_1[\ell_1], \cdots, X_{n-1}[\ell_{n-1}]), T_n[k_n]\right)
\] 
where $X_j[\ell_j]=\sigma^{-1}_{T_n[k_n]}(T_j[k_j])$. Recall from Proposition \ref{prop summarizing bijection sigma-T[k]} that $X_j[\ell_j]$ is uniquely determined by the property that $(T_n,T_j)\sim (X_j,T_n)$ and the slope of $X_j$ is equal to or one less than the slope of $T_j$. In other words, $\theta_n$ is the same as the bijection given by action of the Garside braid on the exceptional sequence $(T_n,\cdots,T_1)$. We note that, by the properties of the braid group action on exceptional sequences, $(X_{n-1},\cdots,X_1)$ is an exceptional sequence and, by the properties of the bijection $\sigma_{T_n[k_n]}$, $X_i$ are compatible (mutually ext-orthogonal). Furthermore, $X_i,X_{i+1}$ are hom-ext-orthogonal if and only if $T_i,T_{i+1}$ are hom-ext-orthogonal. Therefore, $X_i[\ell_i]$ satisfy the induction hypotheses. So, the process can be iterated and the proof is complete.
\end{proof}

\subsection{Tropical duality}\label{ss42}

From the proof of Theorem \ref{thm of BRT} we obtain a version of the tropical duality formula for $m$-clusters. (Corollary \ref{cor: duality between m-clusters and c-vectors} below.)

\begin{lem}\label{lem: slope of th(T) is = or 1 less than slope of T}
In the correspondence $E=\theta(T)$, the slope of each component $E_j$ is either equal to or one less than the slope of $T_j$.
\end{lem}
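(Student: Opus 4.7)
The plan is induction on $n$. The base case $n=1$ is trivial since $\theta_1$ is the identity. For $n\ge 2$, the recursive formula
\[
\theta_n(T_1[k_1],\ldots,T_n[k_n])=\left(\theta_{n-1}(X_1[\ell_1],\ldots,X_{n-1}[\ell_{n-1}]),\,T_n[k_n]\right)
\]
with $X_j[\ell_j]=\sigma_{T_n[k_n]}^{-1}(T_j[k_j])$ gives $E_n=T_n[k_n]$ with unchanged slope. For $j<n$, Proposition~\ref{prop summarizing bijection sigma-T[k]} gives $\ell_j\in\{k_j,k_j+1\}$, and the inductive hypothesis applied inside $T_n^{\perp}$---itself equivalent to the module category of a hereditary algebra---says the slope of $E_j$ is equal to or one less than the slope of $X_j[\ell_j]$. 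If $\ell_j=k_j$ this already suffices. The substantive case is $\ell_j=k_j+1$, in which the outer step already drops the slope by $1$ and we must rule out a further drop in the inner recursion.

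In that case $T_j[k_j]\in\cA'$ relative to $T_n[k_n]$, so $X_j\in\cA$ and, by Remark~\ref{X is rel projective}, $X_j$ is relatively projective in $T_n^{\perp}$. The key step, which I would formulate as an auxiliary tracking lemma, is that once a slope drop has occurred for component $j$ at some step of the iterated recursion, the tracked module $X_j^{(r)}$ stays relatively projective in the current wide subcategory $\cW^{(r)}$ at every subsequent step. Remark~\ref{X is rel projective} supplies the initial case. At each subsequent step in which no drop occurs, $\sigma^{-1}$ acts on the tracked module either by the identity branch---and relative projectivity is inherited by any sub-wide-subcategory containing the module, since $\Ext$-groups are those of mod-$\Lambda$---or by the $\beta^{-1}$ branch---in which case Lemma~\ref{extension to k=m}, applied inside the ambient wide subcategory (valid because every finitely generated wide subcategory is equivalent to the module category of a hereditary algebra), converts relative projectivity in the ambient into relative projectivity in the new sub-wide-subcategory.

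A hypothetical further drop at some later step would, by the definition of $\cA'$, produce an epimorphism $P^s\twoheadrightarrow X_j^{(r)}$ in $\cW^{(r)}$ with $P$ the pivot at that step. By the tracking lemma this epimorphism splits, and indecomposability of $X_j^{(r)}$ forces $X_j^{(r)}\cong P$, contradicting the non-isomorphism required of compatible objects in the $m$-cluster. Therefore only one drop can occur along the chain from $T_j$ to $E_j$, completing the induction. The main obstacle is carefully tracking the underlying module and its relative projectivity through the $\beta^{-1}$-branches of the non-drop steps, where the underlying module itself changes; this is precisely where the generalization of Lemma~\ref{extension to k=m} to an arbitrary ambient wide subcategory is used.
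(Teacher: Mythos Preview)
Your argument is correct and follows essentially the same route as the paper's proof. Both proofs trace the chain $T_j\mapsto X_j\mapsto X_j'\mapsto\cdots\mapsto E_j$, observe via Remark~\ref{X is rel projective} that after a first slope drop the tracked module becomes relatively projective in the current wide subcategory, invoke Lemma~\ref{extension to k=m} (applied inside the ambient wide subcategory) to propagate relative projectivity through subsequent non-drop steps, and conclude that a second drop is impossible. Your splitting/indecomposability contradiction makes explicit what the paper leaves implicit in the sentence ``the slope cannot change more than once,'' and your case split between the identity branch and the $\beta^{-1}$-branch of $\sigma^{-1}$ is a welcome clarification; but the underlying strategy is the same.
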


\begin{proof} Let $X_j$ be as in the proof of Theorem \ref{thm of BRT}. By Remark \ref{X is rel projective}, if the slope of $X_j$ differs from the slope of $T_j$ then $X_j$ is relatively projective in the smaller category $T_n^\perp$ and $slope\,X_j=slope\,T_j-1$. Also, $T_j$ cannot be projective since there is a nonsplit short exact sequence $X_j\into T_n^s\onto T_j$. Thus, in the next step, when $X_j$ is replaced by $X_j'\in (T_n\oplus T_{n-1})^\perp$, $X_j'$ must have the same slope as $X_j$ if $X_j$ is projective. By Lemma \ref{extension to k=m}, once an object becomes relatively projective, it will remain relatively projective through the later steps. Thus, in the sequence $T_j\mapsto X_j\mapsto X_j'\mapsto\cdots\mapsto E_j$, the slope cannot change more than once.
\end{proof}

\begin{cor}\label{cor: duality between m-clusters and c-vectors}
The dimension vectors of components of an $m$-cluster tilting object $T=\bigoplus T_i[k_i]$ and of the dual $m$-configuration $X=\bigoplus X_j[\ell_j]$ are related by the formula:
\[
	V^tEC=D
\]
where $V$ has columns $\undim T_i[k_i]=(-1)^{m-k_i}\undim T_i$, $C$ has columns $c_j=(-1)^{m-\ell_j}\undim X_j$, $D$ is the diagonal matrix with diagonal entries $f_k=\dim_K\End(T_k)$ and $E$ is the Euler matrix with entries $e_{ij}=\dim_K\Hom(S_i,S_j)-\dim_K \Ext^1(S_i,S_j)$.
Furthermore, the slope of each $T_j$ is either equal to or one more than the slope of $\tilde c_j$, whichever gives the correct sign for the pairing $\left<\undim T_j[k_j], c_j\right>=+f_j$.
\end{cor}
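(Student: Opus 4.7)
My plan is to prove the matrix identity $V^t E C = D$ by induction on $n$, following the recursive structure of the bijection $\theta_n$ from the proof of Theorem \ref{thm of BRT}. Since $\Lambda$ is hereditary, the Euler matrix satisfies $(\undim M)^t E (\undim N) = \langle M, N\rangle$, where $\langle M, N\rangle := \dim_K\Hom(M,N) - \dim_K\Ext^1(M,N)$ is the Euler form. Substituting the sign conventions $\undim T_i[k_i] = (-1)^{m-k_i}\undim T_i$ and $c_j = (-1)^{m-\ell_j}\undim X_j$, the $(i,j)$ entry of $V^t E C$ becomes $(-1)^{k_i+\ell_j}\langle T_i, X_j\rangle$, which by the identity $\langle M[a], N[b]\rangle_{\cD^b} = (-1)^{a-b}\langle M, N\rangle$ (valid for hereditary $\Lambda$) equals the derived-category Euler pairing $\langle T_i[k_i], X_j[\ell_j]\rangle_{\cD^b}$. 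Hence $V^t E C = D$ is equivalent to the identity
\[
\langle T_i[k_i], X_j[\ell_j]\rangle_{\cD^b} = f_j\,\delta_{ij}. \qquad (\ast)
\]

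From the proof of Theorem \ref{thm of BRT}, $X_n[\ell_n] = T_n[k_n]$ and, for $j < n$, the remaining $X_j[\ell_j]$ are produced by applying $\theta_{n-1}$ inside $T_n^\perp$ to $\sigma_{T_n[k_n]}^{-1}(T_j[k_j])$. The cases of $(\ast)$ involving index $n$ are immediate: $i = j = n$ gives $\langle T_n, T_n\rangle = f_n$; for $i = n$ and $j < n$, the condition $X_j \in T_n^\perp$ together with heredity force $\Hom_{\cD^b}(T_n, X_j[r]) = 0$ for every $r \in \ZZ$, hence $\langle T_n[k_n], X_j[\ell_j]\rangle_{\cD^b} = 0$; for $j = n$ and $i < n$, the exceptional pair condition on $(T_n, T_i)$ gives $\langle T_i, T_n\rangle = 0$.

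The case $i, j < n$ rests on the following key lemma, to be proved by case analysis: \emph{if $\sigma_{T[k]}(W[p]) = W'[p']$ and $Z_0 \in T^\perp$ is any module, then $\langle W[p], Z_0[r]\rangle_{\cD^b} = \langle W'[p'], Z_0[r]\rangle_{\cD^b}$ for every $r \in \ZZ$.} Unwinding the derived Euler form, this reduces to $\langle W, Z_0\rangle = (-1)^{p'-p}\langle W', Z_0\rangle$. When $\sigma_{T[k]}$ preserves the shift (cases (a)--(c) of Remark \ref{chart for bijection b} and case (d) of Lemma \ref{lem: bijection bk}), the relevant short exact sequence involving $T^s$ gives $\undim W - \undim W' \in \ZZ\cdot\undim T$, and $\langle T, Z_0\rangle = 0$ kills the $T$-contribution. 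In the $\cA$-case where $p' = p - 1$, the sequence $0 \to W \to T^s \to W' \to 0$ gives $\undim W + \undim W' = s\undim T$, producing exactly the required sign $\langle W, Z_0\rangle = -\langle W', Z_0\rangle$. Granting the lemma, set $T = T_n$, $W[p] = \sigma_{T_n[k_n]}^{-1}(T_i[k_i])$ and $Z_0[r] = X_j[\ell_j]$ (which lies in $\cD^b(T_n^\perp)$); the lemma yields $\langle T_i[k_i], X_j[\ell_j]\rangle_{\cD^b} = \langle W[p], X_j[\ell_j]\rangle_{\cD^b}$, and the inductive hypothesis in $T_n^\perp$ evaluates this as $\dim_K\End(W)\,\delta_{ij}$. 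Since $W$ is either $T_i$ or $\gamma^{-1}(T_i)$, and $\gamma$ acts on dimension vectors by a reflection preserving the symmetric Euler form, $\dim_K\End(W) = \langle W, W\rangle = \langle T_i, T_i\rangle = f_i$, completing the induction.

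The furthermore clause follows from $(\ast)$ together with Lemma \ref{lem: slope of th(T) is = or 1 less than slope of T}: the latter forces $\ell_j - k_j \in \{0,1\}$, so the slope of $T_j$ equals or is one more than the slope of $\tilde c_j$. Since $(\ast)$ requires $(-1)^{k_j-\ell_j}\langle T_j, X_j\rangle = +f_j > 0$, the sign $(-1)^{k_j-\ell_j}$ must match the sign of $\langle T_j, X_j\rangle$, which pins down $\ell_j$. The main obstacle I anticipate is the case analysis in the key lemma, particularly the $p = k$ subcase where Lemma \ref{lem: bijection bk} introduces an extra case (d) beyond those of Remark \ref{chart for bijection b}; fortunately $W = W'$ there, so the identity is trivially satisfied.
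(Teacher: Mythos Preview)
Your proof is correct, and its core mechanism coincides with the paper's: both exploit that the passage $T_i\mapsto X_i$ under $\sigma^{-1}_{T_n[k_n]}$ changes dimension vectors only by a multiple of $\undim T_n$ (equation \eqref{eq: Xi in terms of Ti} in the paper), combined with $\langle T_n,Z\rangle=0$ for $Z\in T_n^\perp$. The organization, however, differs. You package this as a single ``key lemma'' (that $\sigma_{T[k]}$ preserves the derived Euler pairing against objects of $T^\perp$) and run a clean induction on $n$ inside $T_n^\perp$, handling all entries with $i,j<n$ uniformly via the inductive hypothesis; your reformulation $(\ast)$ in terms of the derived Euler form also makes the sign bookkeeping automatic. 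The paper instead proves three separate claims for $i<j$, $i>j$, $i=j$: for $i<j$ it expands $\undim E_j$ in the $\undim T_k$ basis and invokes $\langle T_i,T_k\rangle=0$ for $k>i$; for $i>j$ it inducts on $n-i$ using \eqref{eq: Xi in terms of Ti}; for $i=j$ it iterates \eqref{eq: Xi in terms of Ti} and tracks the sign $\delta$ by hand. Your approach is slightly more conceptual and avoids the asymmetric treatment of $i<j$ versus $i>j$; the paper's is more explicit about the intermediate linear relations. Both handle the ``furthermore'' clause identically via Lemma~\ref{lem: slope of th(T) is = or 1 less than slope of T}.
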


\begin{rem}\label{rem: equivalent to VtED=D}
The formula $V^tEC=D$ is equivalent to the \emph{tropical duality} formula $G^tDC=D$ from \cite{14-NZ} where $G$ is the matrix of $g$-vectors (times $(-1)^m$ to match the sign shift of $C$. In \cite{15-HVfans} we shift the sign of $C$ but not of $G$ so that $G^tDC=(-1)^mD$.). The equations are equivalent since $V^t=G^tDE^{-1}$. This follows from the well-known fact that the rows of $DE^{-1}$ are the dimension vectors of the indecomposable projective $\Lambda$-modules and the $k$th $g$-vector gives the projective presentation of the $k$th component $T_k$ of the $m$-cluster tilting object $T$.
\end{rem}

\begin{proof}
This follows from the proof of Theorem \ref{thm of BRT} where we recall that $\theta(T)=(E)=(E_1,\cdots,E_n)$ and $X_k$ are given by the braid move $(T_n,T_k)\sim (X_k,T_n)$.

Claim 1: $\brk{\undim T_i,\undim E_j}=0$ if $i< j$ where $\brk{x,y}:=x^tEy$.

Proof: Since $\undim E_j$ is a linear combination of $\undim X_k$ for $k\ge j$ which are linear combinations of $\undim T_k$ by induction, $\brk{\undim T_i,\undim E_j}$ is a linear combination of $\brk{\undim T_i,\undim T_k}$ for $k\ge j>i$. But these are all zero since $(T_n,\cdots,T_1)$ is an exceptional sequence.

Claim 2: $\brk{\undim T_i,\undim E_j}=0$ if $i> j$.

Proof: By induction on $n-i$. First, let $i=n$. Then, for $j\le k<n$, $X_k\in T_n^\perp$ for $X_k[\ell_k]=\sigma_{T_n[k_n]}^{-1}(T_j[k_j])$. So, $\brk{\undim T_i,\undim X_k}=0$. But $\undim E_j$ is a linear combination of $\undim X_k$ for $j\le k<n$. So, $\brk{\undim T_i,\undim E_j}=0$. 

Next, suppose $j<i<n$. Then, by induction we have $\brk{\undim T_k,\undim E_j}=0$ for all $i<k\le n$ and $\brk{\undim X_i,\undim E_j}=0$ since $(n-1)-i<n-i$. But the braid relation $(T_n,T_i)\sim (X_i,T_n)$ implies that, either (a) $X_i=T_i$, or (b) there is a short exact sequence 
$
X_i\into T_n^s\onto T_i$ or $T_i\into X_i\onto T_n^s
$. In any case we have
\begin{equation}\label{eq: Xi in terms of Ti}
	\undim T_i=\delta \undim X_i+s\undim T_n
\end{equation}
for $\delta=\pm1$ and some integer $s$. So,
\[
	\brk{\undim T_i,\undim E_j}=\delta \brk{\undim X_i,\undim E_j}+s\brk{\undim T_n,\undim E_j}=0.
\]

Claim 3: For $i=j$, $\brk{\undim T_i,\undim E_i}=\pm f_i$ where $f_i=\dim_K \End(T_i)$.

Proof. By iteration the formula \eqref{eq: Xi in terms of Ti} we see that $\undim T_i$ is equal to $\delta \undim E_i$ plus a linear combination of $\undim T_k$ for $i<k\le n$. Since $\brk{\undim T_i,\undim T_k}=0$ for all such $k$ we get
\[
	f_i=\brk{\undim T_i,\undim T_i}=\delta\brk{\undim T_i,\undim E_i}
\]
as claimed.

Finally, we note that the sign $\delta=\pm1$ in \eqref{eq: Xi in terms of Ti} is equal to $-1$ only when $T_i,X_i$ are related by a short exact sequence of the form $X_i\into T_n^s\onto T_i$. In this case the slope of $X_i$ is one less than the slope of $T_i$ and this can happen only once for each $i$. Therefore, either $\brk{\undim T_i,\undim E_i}=f_i$ and $T_i,E_i$ have the same slope or $\brk{\undim T_i,\undim E_i}=-f_i$ and the slope of $E_i$ is one less than the slope of $T_i$. The proof of the duality formula is complete.
\end{proof}

\subsection{Mutation formula for $\tilde c$-vectors}\label{ss43} For each $m$-configuration $X$ we construct a sequence of overlapping wide subcategories $H_s(X)$ of $mod\text-\Lambda$ which we call ``horizontal'' or ``vertical'' subcategories depending on the parity of $s$. Then we show that each mutation of $X$ can be construed to take place in the cluster category of one of these subcategories.

In the $m$-cluster category of $mod\text-\Lambda$, each component $T_k$ of each $m$-cluster tilting object $T$ can be replaced with one of $m$ other objects which are given by $\mu_k^+(T_k), (\mu_k^+)^2(T_k),\cdots, (\mu_k^+)^s(T_k)$ and $\mu_k^-(T_k), (\mu_k^-)^2(T_k)\cdots, (\mu_k^+)^{m-s}(T_k)$ where the formulas for the ``positive'' and ``negative'' mutation operators $\mu_k^+$, $\mu_k^-$ are defined using distinguished triangles \cite{11-IY}. We use the sign convention given by slope. We recall that there are distinguished triangles
\[
	T_k\to B\to \mu_k^-(T_k)\to T_k[1],\quad T_k[-1]\to \mu_k^+(T_k)\to B'\to T_k
\]
where $B,B'$ are left/right $add\, T/T_k$-approximations of $T_k$. The slope of $\mu_k^+(T_k)$ is greater than or equal to the slope of $T_k$ and analogously for $\mu_k^-(T_k)$.

The mutation formula is better behaved for the corresponding $\tilde c$-vectors since $\mu_k^+$ always increases the slope of $\tilde c_k$ by one and $\mu_k^-$ always decreases the slope of $\tilde c_k$ by one. In fact, positive and negative mutation of $\tilde c$ is given by the following formula which needs a definition first. 

Let $\vare\beta\in\ZZ^n$ be a real Schur root for $\Lambda$ where $\vare=\pm1$ is its sign and $\beta$ is a positive real Schur root. Then we define the vector $\psi_s(\vare\beta)\in \ZZ[t]^n$ to be $t^s\beta$ if $\vare=+1$ and $t^{s+1}\beta$ if $\vare=-1$.

\begin{rem}\label{Hs(X) is intersection of perp cats}
Given $X$ an $m$-configuration and consecutive integers $s<s+1$, let $\cA_s(X)$ denote wide subcategory of $mod\text-\Lambda$ spanned by modules $M_i$, $i=1,\cdots,h_s$, so that either $M_i(s)$ or $M_i(s+1)$ is a component of $X$ (Recall that $M_i(s)=M_i[m-s]$). Then $\cA_{s}(X)$ is a rank $h_s$ wide subcategory of $mod\text-\Lambda$ so that $\cA_s(X)\subseteq \cA_t(X)^\perp$ for all $t\ge s+2$. In fact $\cA_s(X)$ is the intersection of the perpendicular categories 
\begin{equation}\label{eq: Hs(X) as intersection of perpendicular categories}
\cA_s(X)=
\bigcap_{t\ge s+2} \cA_t(X)^\perp \cap 
\bigcap_{r\le s-2} \,^\perp \cA_r(X) .
\end{equation}
Note that, in general, $\cA_s\cap \cA_t$ is nonzero only when $|t-s|\ge2$.
\end{rem}

For $i=1,\cdots,h_s$, let $c_i(s)=\undim M_i$ or $-\undim M_i$ depending on whether $M_i(s)$ or $M_i(s+1)$ is a component of $X$, respectively. Then Definition \ref{def: mHom-configuration} implies that the vectors $c_i(s)$ satisfy the condition of \cite{13-ST} and therefore form a set of $c$-vectors for some cluster tilting object for $\cA_s(X)$. Let $Z(s)\cong \ZZ^{h_s}$ be the additive subgroup of $\ZZ^n$ freely generated by these $h_s$ elements. Then, for any positive real Schur root $\beta$ of $\Lambda$, $\beta\in Z(s)$ if and only if the corresponding exceptional $\Lambda$-module $M_\beta$ lies in the wide subcategory $\cA_s(X)$. The reason is the $\cA_s(X)$ is given by the linear condition \eqref{eq: Hs(X) as intersection of perpendicular categories} whose integer solution set is $Z(s)$. We define $\psi_s(\beta), \psi_s(-\beta)$ to be the objects in the bounded derived category of $\Lambda$ equal to $M_\beta$ at level $[m-s]$ (with slope $s$) or level $[m-s-1]$ with slope $s+1$, respectively:
\[
	\psi_s(\beta)=M_\beta(s),\ \psi_s(-\beta)=M_\beta(s+1)
\]
Recall that $X=\bigoplus M_i[k_i]$ is a fixed $m$-configuration.

\begin{lem}\label{lem: get another m-configuration}
Let $v_i\in Z(s)\subset\ZZ^n$, $i=1,\cdots,h_s$ satisfying the following.
\begin{enumerate}
	\item Each $v_i$ is a positive or negative real Schur root of $\Lambda$. Let $N_i$ or $N_i[1]$, resp., be the corresponding object of the derived category of $\cA_s(X)$.
	\item After possibly rearranging the terms, $(N_1,\cdots,N_{h_s})$ form an exceptional sequence with  negative terms (terms corresponding to negative $v_i$) coming after all positive terms.
\end{enumerate} 
Then, the direct sum of the $h_s$ objects $\psi_s(v_i)$ together with all components of $X$ of slope not equal to $s$ or $s+1$ is another $m$-configuration for $\Lambda$.
\end{lem}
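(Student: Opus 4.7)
My plan is to verify the three defining conditions of Definition \ref{def: mHom-configuration} for the new collection
\[
Y := \bigoplus_{i=1}^{h_s} \psi_s(v_i) \;\oplus\; \bigoplus_{\substack{j\,:\, k_j \notin \{m-s-1,\, m-s\}}} M_j[k_j].
\]
Condition~(1) is immediate, since each $\psi_s(v_i)$ has slope $s$ or $s+1$ (both in $\{0,\dots,m\}$), and the untouched components keep the slopes they already had in $X$. The real content is in conditions~(2) and~(3), and I would organise the argument by separating interactions among the new components (which live inside the wide subcategory $\cA_s(X)$) from cross-interactions with the untouched components.

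For the cross-interactions the key observation is that $\cA_s(X)$ sits inside the appropriate perpendicular category of every untouched component. Each untouched $M_j[k_j]$ has slope strictly less than $s$ or strictly greater than $s+1$. Applying Definition~\ref{def: mHom-configuration}(2) to the original $X$ for each pair $(M_\ell, M_j)$, where $M_\ell$ runs over the generators of $\cA_s(X)$, I would deduce $M_\ell \in {}^\perp M_j$ when $k_j > k_\ell$ and $M_\ell \in M_j^\perp$ when $k_j < k_\ell$. Because perpendicular categories are wide, this inclusion passes from the generators to all of $\cA_s(X)$, and hence to every $\psi_s(v_i)$. That single observation yields all of the cross Hom-vanishings demanded by Definition~\ref{def: mHom-configuration}(2); it also ensures that if we order the full collection by ascending slope, using the given exceptional order for the $N_i$ inside $\cA_s(X)$ and the original order for the $M_j$ outside, the resulting sequence is exceptional across every boundary, establishing condition~(3).

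For interactions among the new components I would identify hypothesis~(2) with the Speyer--Thomas characterization \cite{13-ST} of $c$-matrices, applied to $\cA_s(X)$. This gives that the positive $N_i$ (slope $s$) are the simples of a wide subcategory of $\cA_s(X)$, hence mutually hom- and ext-orthogonal, and likewise for the negative $N_i$ (slope $s+1$). For a positive $N_i$ paired with a later negative $N_j$, the condition required by Definition~\ref{def: mHom-configuration}(2) reduces to the exceptional-pair vanishing $\Hom(N_j, N_i) = \Ext^1(N_j, N_i) = 0$, which is supplied directly by hypothesis~(2).

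The main obstacle is the same-slope mutual hom-orthogonality inside the new block: a bare exceptional-sequence hypothesis does not visibly give it, and the argument hinges on interpreting hypothesis~(2) as the Speyer--Thomas $c$-matrix condition for $\cA_s(X)$. Once that interpretation is in place, each same-sign block is forced to consist of the simples of a common wide subcategory, and the mutual hom-orthogonality follows automatically.
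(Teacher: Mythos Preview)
Your detailed verification is exactly what the paper's one-line proof (``follows directly from the definition'') leaves implicit, and your handling of the cross-interactions via perpendicular categories and of condition~(3) via the slope ordering is correct. You have also put your finger on a genuine issue: hypothesis~(2) as literally stated---a complete exceptional sequence in $\cA_s(X)$ with positives before negatives---does \emph{not} force hom-orthogonality among same-sign $N_i$. For instance, take $\Lambda=KA_2$ (arrow $1\to 2$), $m=1$, and $X=S_1[1]\oplus S_2[1]$; then $\cA_0(X)=mod\text-\Lambda$, and the pair $(S_2,P_1)$ with both $v_i$ positive satisfies hypotheses~(1) and~(2), yet $\Hom_\Lambda(S_2,P_1)\neq0$, so $S_2[1]\oplus P_1[1]$ violates Definition~\ref{def: mHom-configuration}(2). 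Your resolution is the right one: in the only application of the lemma, the proof of Theorem~\ref{thm: mutation given by tilde c vectors}, the $v_i$ arise as mutated $c$-vectors and therefore satisfy the full Speyer--Thomas condition \cite{13-ST}, which supplies the missing same-sign hom-orthogonality. So the lemma should really be read with the Speyer--Thomas hypothesis in place of the bare exceptional-sequence hypothesis, and then your argument goes through.

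One minor correction: simples of a wide subcategory are hom-orthogonal but not in general ext-orthogonal. This is harmless here, since Definition~\ref{def: mHom-configuration}(2) at equal levels demands only $\Hom$-vanishing.
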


\begin{proof}
This follow directly from the definition of an $m$-configuration.
\end{proof}

Recall that the $\tilde c$-vectors of an $m$-cluster tilting object of $\Lambda$ are the refined dimension vectors $\tilde c_i=\undim M_i\,t^{s_i}\in \ZZ[t]^n$ for the components $M_i(s_i)=M_i[m-s_i]$ of the $m$-configuration $X$.

\begin{thm}\label{thm: mutation given by tilde c vectors}
Suppose that $\tilde c_k$ has slope $s$ (resp. $s+1$). Then the positive (resp. negative) mutation $X'=\mu_k^+(X)$ (resp. $X'=\mu_k^-(X)$) of $X$ is uniquely determined by its $\tilde c$-vectors $\tilde c_j'$ which are related to the $\tilde c$-vectors $\tilde c_j$ of $X$ as follows.
\begin{enumerate}
\item $\tilde c_k'=\tilde c_kt$ (resp. $\tilde c_k'=\tilde c_kt^{-1}$).
\item $\tilde c_j'=\tilde c_j$ if the slope of $\tilde c_j$ is not equal to $s$ or $s+1$.
\item $\tilde c_j'=\tilde c_j$ if $b_{kj}\le0$ (resp. $b_{kj}\ge0$).
\item When $b_{kj}>0$ (resp. $b_{kj}<0$) and $\tilde c_j$ has slope either $s$ or $s+1$ then:
\begin{enumerate}
	\item $\tilde c_j'$ has slope $s$ or $s+1$.
	\item $c_j'=c_j+|b_{kj}|c_k$ in $\ZZ^n$.
\end{enumerate}
\end{enumerate}
\end{thm}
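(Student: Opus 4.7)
My plan is to reduce the $m$-cluster mutation formula to the ordinary $c$-vector mutation formula of Nakanishi--Zelevinsky \cite{14-NZ} applied inside the wide subcategory $\cA_s(X)$, then translate back via the slope encoding $\psi_s$. Item (2) is essentially immediate: by Remark \ref{Hs(X) is intersection of perp cats}, components of $X$ at slopes outside $\{s,s+1\}$ lie in categories perpendicular to $\cA_s(X)$, so $\mu_k^{\pm}$ should leave their $\tilde c$-vectors fixed. Via the tropical duality Corollary \ref{cor: duality between m-clusters and c-vectors}, this localization transfers from $X$ to $T$: the right $\add(T/T_k)$-approximation defining $\mu_k^+(T_k)$ uses only summands whose underlying modules lie in $\cA_s(X)$.

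Next I would apply the discussion following Lemma \ref{lem: get another m-configuration}: the signed dimension vectors $c_i(s)\in Z(s)\cong\ZZ^{h_s}$ of the components of $X$ at slopes $s$ and $s+1$ form a set of $c$-vectors for an ordinary cluster tilting object of $\cA_s(X)$, where $\psi_s$ identifies positive real Schur roots with slope-$s$ components and negative ones with slope-$(s+1)$ components. Since $\tilde c_k$ has slope $s$, we have $c_k(s)=+c_k$ positive, and ordinary positive mutation at index $k$ in $\cA_s(X)$ acts by the standard formula: $c_k(s)\mapsto -c_k(s)$, and for $j\ne k$, $c_j(s)'=c_j(s)$ when $b_{kj}\le0$ while $c_j(s)'=c_j(s)+b_{kj}c_k(s)$ when $b_{kj}>0$. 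Translating back through $\psi_s$: the sign flip of $c_k(s)$ is precisely the slope shift $s\mapsto s+1$, i.e.\ $\tilde c_k'=t\tilde c_k$, yielding item (1); the unchanged-$c_j(s)$ case gives item (3); and when $b_{kj}>0$, the vector $c_j(s)'$ remains a signed real Schur root in $Z(s)$, so $\psi_s$ places it at slope $s$ or $s+1$ (item (4)(a)) and $c_j'=c_j+|b_{kj}|c_k$ is the translated equation (item (4)(b)). The negative-mutation case follows symmetrically, with $\tilde c_k$ of slope $s+1$ and $c_k(s)=-c_k$ negative.

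The main obstacle I anticipate is the first step: rigorously verifying that $m$-cluster mutation $\mu_k^+$ in $\cD^b(\mathrm{mod}\text-\Lambda)$ agrees, under the $\psi_s$ identification, with ordinary cluster mutation in $\cA_s(X)$. Concretely, one must show that the approximation triangle $T_k[-1]\to\mu_k^+(T_k)\to B'\to T_k$ is realized by the corresponding triangle inside the cluster category of $\cA_s(X)$, which amounts to checking that summands of $T/T_k$ whose $\tilde c$-vectors lie at slopes outside $\{s,s+1\}$ cannot contribute to $B'$. This follows from the Hom-orthogonality of Definition \ref{def: mHom-configuration} together with the sign-slope correspondence of Corollary \ref{cor: duality between m-clusters and c-vectors}. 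A secondary point is matching the exchange matrix entries $b_{kj}$ in the ambient $m$-cluster with those of the ordinary cluster structure induced on $\cA_s(X)$; this should reduce to the same Euler-form calculation on both sides.
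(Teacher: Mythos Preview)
Your proposal is essentially on the same track as the paper: both arguments localize the mutation to the wide subcategory $\cA_s(X)$ spanned by the components of $X$ at slopes $s$ and $s+1$, identify the signed dimension vectors there with an ordinary set of $c$-vectors via \cite{13-ST}, and then invoke the standard $c$-vector mutation formula. Your treatment of items (1)--(4) via the sign/slope dictionary $\psi_s$ is exactly the paper's mechanism.

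The one substantive difference is in how the identification with genuine $m$-cluster mutation is carried out. You propose to show directly that the approximation triangle $T_k[-1]\to\mu_k^+(T_k)\to B'\to T_k$ in $\cD^b(mod\text-\Lambda)$ is realized inside the cluster category of $\cA_s(X)$, by arguing that summands $T_j$ whose $\tilde c$-vectors have slope outside $\{s,s+1\}$ cannot contribute to $B'$. This is the delicate point: the slope of $T_j$ can differ from the slope of $\tilde c_j$ by one (Corollary~\ref{cor: duality between m-clusters and c-vectors}), and the Hom-orthogonality you cite from Definition~\ref{def: mHom-configuration} concerns the components of $X$, not those of $T$; translating that orthogonality across the Garside braid relating $T$ and $X$ takes work you have not spelled out. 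The paper sidesteps this entirely: rather than analyze the approximation triangle, it first verifies (via Lemma~\ref{lem: get another m-configuration}) that the formula produces a valid $m$-configuration $X'$, and then uses the tropical duality pairing $V^tEC=D$ from Corollary~\ref{cor: duality between m-clusters and c-vectors} to compute that the dual $m$-cluster $T'$ satisfies $T_i'=T_i$ for all $i\neq k$ and to solve explicitly for $\undim T_k'$ in terms of the $\undim T_j$. This linear-algebraic route avoids any direct comparison of approximation triangles, and is what makes the argument go through cleanly.
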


\begin{proof}[Proof that this works:]

It satisfies the definition of \cite{9-BRT} which is the following.
\begin{enumerate}
\item components of the same slope are hom-orthogonal
\item components can be arranged in an exceptional sequence
\item components of smaller slope come before components of larger slope in the exceptional sequence.
\end{enumerate}
We show that these properties hold for $X'$, the mutated $X=\bigoplus X_j$.

Suppose that $\delta=+$ and $X_k$ has slope $s$. Then only those components of $X$ of slopes $s,s+1$ are changed and the number of them, say $h$, is unchanged. Let $X_1,\cdots,X_h$ denote these components of $X$ and let $X_j=M_j(\ell_j)=M_j[m-\ell_j]$ where $M_j$ is a $\Lambda$-module and $\ell_j\in \{s,s+1\}$. Let $\cA_s$ be the smallest wide subcategory of $mod\text-\Lambda$ containing the $h$ modules $M_j$. Then $\cA_s\cong mod\text-\Lambda_s$ for some hereditary algebra $\Lambda_s$ of rank $h$ and the vectors $v_j=(-1)^{\ell_j-s}\undim M_j$ form the set of $c$-vectors of a cluster tilting object for $\Lambda_s$ by \cite{13-ST} (since we use the sign convention designed to make this step work).

The mutation formula ($\ast$) is equivalent, by the correspondence $\tilde c_j\leftrightarrow v_j$, to the standard mutation formula for $c$-vectors for $\Lambda_s$. Furthermore, all components of $X$ of slope less than $s$, resp. larger than $s+1$, are right perpendicular, resp. left perpendicular, to all objects in $\cA_s=mod\text-\Lambda_s$. By Lemma \ref{lem: get another m-configuration}, the new $\tilde c_j$ vectors satisfy the required conditions.

Let $X'$ be the mutated $m$-configuration. Then the dual $T'$ of $X'$ clearly has the property that it differs from the dual $T$ of $X$ only in its $k$-th component $T_k$:

(a) For $i\neq j,k$, 
\[
	\brk{\undim T_i,c_j'}=\brk{\undim T_i,c_j}+|b_{kj}|\brk{\undim T_i,c_k}=0
\]
\[
	\brk{\undim T_i,c_k'}= -\brk{\undim T_i,c_k}=0 
\]

(b) For $i=j$,
\[
	\brk{\undim T_j,c_j'}=\brk{\undim T_j,c_j}+|b_{kj}|\brk{\undim T_j,c_k}=f_j
\]
So, $T_i$ satisfies the tropical equations (Corollary \ref{cor: duality between m-clusters and c-vectors}) characterizing $T_i'$ and, thus, $T_i'=T_i$ for all $i\neq k$. The equations which characterize $T_k'$ are:
\[
	\brk{\undim T_k',c_j'}=\brk{\undim T_k',c_j}+|b_{kj}|\brk{\undim T_k',c_k}=0
\]
\[
	\brk{\undim T_k',c_k'}=-\brk{\undim T_k',c_k}=f_k
\]
If we write $\undim T_k'=\sum a_j \undim T_j$ then the second equation gives $a_k=-1$ and the first equation gives:
\[
	a_jf_j=\begin{cases} |b_{kj}|f_k=\delta(\brk{c_j,c_k}-\brk{c_k,c_j}) & \text{if the sign of $b_{kj}$ is $\delta$}\\
    0& \text{otherwise}
    \end{cases}
\]
So,
\[
	a_j=\begin{cases} |b_{jk}| & \text{if the sign of $b_{kj}$ is $\delta$}\\
    0& \text{otherwise}
    \end{cases}
\]
and, by Corollary \ref{cor: duality between m-clusters and c-vectors}, $T_k'$ is uniquely determined by its dimension vector.
\end{proof}

In the next paper \cite{15-HVfans} the wide subcategories $\cA_s$ are used to give a visualization of $m$-clusters analogous to the semi-invariant pictures \cite{16-ITW16} (also known as ``scattering diagrams'') used in \cite{17-BHIT} to derive properties of maximal green sequences. The old Section 2 has been extracted into a separate paper \cite{AnProb}.

\section*{Acknowledgements}

Some of the results of this paper and its sequel \cite{15-HVfans} were announced in a lecture at the Workshop on Cluster Algebras and Related Topics at the Chern Institute at Nankai University in July, 2017. The author would like to thank Fang Li, Bin Zhu and the other organizers of this wonderful meeting where the results of this paper took final form. The author also thanks Steve Hermes, Keith Merrill, Theo Douvropoulos and Shujian Chen for many discussion about this work and its sequel which is being prepared. The author thanks the Simons Foundation for its support: Grant \#686616.

\end{document}